\newcommand{\E}{\mathbb E}
\newcommand{\R}{\mathbb R}
\newcommand{\C}{\mathcal C}
\newcommand{\D}{\mathcal D}
\newcommand{\F}{\mathcal F}
\newcommand{\ep}{\varepsilon}
\newcommand{\eps}{\varepsilon}
\newcommand{\ud}{\,\mathrm{d}}
\newcommand{\vd}{\mathrm{d}}
\newcommand{\lt}{\left}
\newcommand{\rt}{\right}
\newcommand{\Dr}{X} 
\newcommand{\supp}{\mathop{\mathrm{supp}}\nolimits}
\newcommand{\Var}{\mathop{\mathrm{Var}}\nolimits}
\numberwithin{equation}{section}
\theoremstyle{plain}
\newtheorem{theorem}{Theorem}[section]
\newtheorem{lemma}[theorem]{Lemma}
\newtheorem{corollary}[theorem]{Corollary}
\newtheorem{proposition}[theorem]{Proposition}
\newtheorem{assump}[theorem]{Assumption}
\newtheorem{remark}[theorem]{Remark}
\theoremstyle{definition}
\newtheorem*{acknowledgement}{Acknowledgement}
\begin{document}

\title{Bayesian sequential least-squares estimation for the drift of a Wiener process}

\author{Erik Ekstr\"om\thanks{Department of Mathematics, Uppsala University, 75106 Uppsala, Sweden \mbox{(email: \href{mailto: ekstrom@math.uu.se}{ekstrom@math.uu.se})}},
Ioannis Karatzas\thanks{
Department of Mathematics, Columbia University, 2990 Broadway, New York, NY 10027, USA \mbox{(email: 
\href{mailto: ikl@columbia.edu}{ikl@columbia.edu})}; 
and \textsc{Intech} Investment Management, One Palmer Square, Suite 441, Princeton, NJ 08542, USA
\mbox{(email: \href{mailto: ikaratzas@intechjanus.com}{ikaratzas@intechjanus.com})}. Support from the National Science Foundation under grant NSF-DMS-14-05210 is gratefully acknowledged.
} and Juozas Vaicenavicius\thanks{Department of Information Technology, Uppsala University, 75105 Uppsala, Sweden \mbox{(email: \href{mailto: juozas.vaicenavicius@it.uu.se}{juozas.vaicenavicius@it.uu.se})}}}

\maketitle
\begin{abstract}

\smallskip
\noindent
Given a Wiener process with unknown and  unobservable drift, we try to estimate this drift as effectively but also as quickly as possible, in the presence of a quadratic penalty for the estimation error and of a fixed, positive cost per unit of observation time.  In a Bayesian framework, where the unobservable drift is assumed to have a known ``prior" distribution, this question reduces to choosing judiciously a stopping time  for an appropriate  diffusion process in natural scale. We  establish structural properties  of the solution for the corresponding  problem of optimal stopping. In particular,  we show that, regardless of the prior distribution, the continuation region is monotonically shrinking in time; and provide conditions on the prior distribution   guaranteeing a one-sided stopping region. Finally, we illustrate the theoretical results through a   detailed study of some concrete prior distributions.

\end{abstract}

\smallskip
\smallskip
\noindent
\textit{MSC 2010 subject classification:} primary 62L12; secondary  62L10, 60G40.

\noindent
\textit{Keywords:} sequential estimation, sequential analysis, optimal stopping.

\section{Introduction}

Imagine   trying to estimate a quantity about which there is considerable uncertainty, and which cannot be observed directly. Instead, one has access to a stream of observations that this unobservable quantity affects  and, based on this stream,   tries to find an  estimator of the unobservable quantity which is ``optimal" in the sense of least-squares. However, access to the stream of information is costly: one pays a fixed, positive cost per unit of time, for as long as information is being obtained. How does one  then resolve the dilemma inherent in this situation, which calls for balancing the conflicting requirements of fidelity in estimation  and of cost minimization?

We study here an instance of this problem in a highly idealized and stylized form, and in a Bayesian setting. Namely, we assume that the unobservable quantity is a   random variable $X$ with known distribution, and that one observes sequentially 
the   process 
\begin{IEEEeqnarray}{rCl} 
\label{E:Y}
Y(t)=X t + W(t)\,, \qquad 0 \le t < \infty\,.\end{IEEEeqnarray} 
Here $W$ is a standard Wiener process, independent of the random variable $X$. Moreover, 
we assume that  the  known,  ``prior" distribution $\mu$ of $X$ has finite second moment  and is  {\it non-atomic;} that is,  we exclude the trivial case where $\mu$ is a one-point distribution. We posit that, at any given time $t \in [0, \infty)$, we have  access to the observations 
\[\sigma\{Y(s),0\leq s\leq t\}.\]
The right-continuous augmentation $\, \mathbb F^Y=\{\mathcal F^Y(t)\}_{0\leq t<\infty}\,$  of the family  of $\sigma-$algebras  $\, \big( \sigma\{Y(s),0\leq s\leq t\} \big)_{0\leq t<\infty}\,$ is called  the {\em observations filtration}, and we set 
$$\,
 {\cal F}^Y (\infty) \, := \, \sigma \bigg( \bigcup_{\,0 \le t < \infty} {\cal F}^Y (t) \bigg)\,.
$$
We denote by $\mathcal T^Y$   the collection  of   stopping times of this filtration $\, \mathbb F^Y$; to wit, the collection of random variables $\tau : \Omega \to [0, \infty)$ with 
$\{ \tau \le t \} \in {\cal F}^Y (t)$   for every $t \in [0, \infty)$.  

\smallskip
Based on the flow of information $\mathbb F^{\,Y} $, we construct  the least-squares estimate 
\begin{equation}\label{11a}
\widehat X(t)\,=\,\E \big[ X\,\vert \,\mathcal F^Y(t)\big]\,, \qquad 0\leq t<\infty
\end{equation}
of the unobserved variable. In this work we seek to compute the minimal expected cost
\begin{IEEEeqnarray}{rCl} 
\label{E:OSP}
C_* = \inf_{\tau \in \mathcal T^Y} C(\tau)\,, \qquad C(\tau) := \E \big[\big(X-\widehat X(\tau)\big)^2+c\tau \big],
\end{IEEEeqnarray} 
and to determine  whether it is attained by some stopping time $\,\tau_* \in \mathcal T^Y$. 
Here  $c>0$ is a given real constant, representing the cost of one unit of delay in the estimation procedure. The positivity of this constant, along with the obvious bound $C_* \le C(0) = \text{Var} (X) < \infty$, implies that we may restrict attention in (\ref{E:OSP}) to stopping times $\tau$ with 
\begin{equation}
 \label{13}
\E [\tau] < \infty.
\end{equation}

\noindent{\bf Preview:}
We show in Section~\ref{sec2} that the least-squares estimate $\widehat X$ in \eqref{11a}
is an It\^o diffusion in natural scale, and describe its dynamics in detail. The question of \eqref{E:OSP} is cast in Section~\ref{sec3} as a problem of optimal stopping for this process $\widehat X$, with a cost criterion that involves only the constant $c>0$ and the local variance function of $\widehat X$. This piece of serendipity allows us to obtain quite general qualitative properties of the solution, as developed in Section~\ref{sec5}. 
In particular, we show that the continuation region contracts in time for any given prior distribution, and we provide conditions under which the continuation region and the stopping region are separated by a single curve.
Some explicit results for the Gaussian and the Bernoulli cases are presented in Sections~\ref{sec3} and \ref{Bernoulli}, respectively. Finally, Section~\ref{sec6} presents an additional example in which the prior distribution is symmetric.

\begin{acknowledgement}
We are heavily indebted to Lane Chun Yeung for the interest he took in this work, the many suggestions, comments and corrections he made, and the additional examples he provided. 

Many thanks go also to Miguel-\'Angel Garrido, Daniel Lacker, Martin Larsson, Johannes Ruf and Walter Schachermayer for the many critical comments to this work that helped us sharpen our thinking; and to the two referees, for spotting errors and helping us clarify and sharpen the exposition of the paper.
\end{acknowledgement}

\section{Preliminaries on the conditional mean and variance processes}
\label{sec2}

In this section  we recall a general result  regarding the conditional mean and variance processes from the theory  of filtering. We build then on this result  in order to unveil the structure of our problem at hand,  as well as the stochastic dynamics of the processes that are crucial for its analysis. 

\subsection{Projecting onto the observations filtration}

We first recall the conditional distribution of $X$ given observations on the process $Y$. 
For a proof we refer to \cite[Proposition 3.16]{BC09}. 

\begin{proposition}
\label{T:integrals}
Consider a function $q:\R\to\R$ satisfying the integrability condition $$\int_\R  \, \big|q(u)\big| \,\mu(\vd u) < \infty\,.$$ Then, for any $t\geq0$, we have 
\begin{IEEEeqnarray*}{rCl} 
\E\lt[ q(\Dr) \vert \mathcal F^Y(t)\rt] 
=\frac{\int_\R q(u) \, \exp\{uy -u^2t/2\} \, \mu(\vd u)}{\int_\R \, \exp\{uy -u^2t/2\} \, \mu(\vd u)} \,\bigg|_{y=Y(t)}\,.
\end{IEEEeqnarray*} 
\end{proposition}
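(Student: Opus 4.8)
The plan is to obtain this identity, which is the Kallianpur--Striebel (``Bayes'') formula of nonlinear filtering, by means of a Girsanov change of measure. Fix $t\ge0$ and introduce, on the $\sigma$-algebra $\mathcal G_t:=\sigma(X)\vee\mathcal F^Y(t)$, the probability measure $\P_0$ defined by
\begin{IEEEeqnarray*}{rCl}
\frac{\vd\P_0}{\vd\P}\,=\,\exp\big\{-X\,W(t)-X^2t/2\big\}\,=\,\exp\big\{-X\,Y(t)+X^2t/2\big\}\,.
\end{IEEEeqnarray*}
That this is a probability measure follows by conditioning on $X$ and using Tonelli: for each frozen value $u$ of $X$ one has $\E\big[\exp\{-uW(t)-u^2t/2\}\big]=1$, so the total mass is $\int_\R 1\,\mu(\vd u)=1$, and no moment assumption on $\mu$ is needed. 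By Girsanov's theorem (applied on $[0,t]$) the process $(Y(s))_{0\le s\le t}$ is, under $\P_0$, a standard Wiener process that is independent of $X$, while the law of $X$ is still $\mu$; in particular $X$ is independent of $\mathcal F^Y(t)$ under $\P_0$.

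Write $\Lambda_t:=\vd\P/\vd\P_0=\exp\{X\,Y(t)-X^2t/2\}$ on $\mathcal G_t$. Since $\mathcal F^Y(t)\subseteq\mathcal G_t$, the abstract Bayes rule yields
\begin{IEEEeqnarray*}{rCl}
\E\big[q(X)\,\big|\,\mathcal F^Y(t)\big]\,=\,\frac{\E_0\big[q(X)\,\Lambda_t\,\big|\,\mathcal F^Y(t)\big]}{\E_0\big[\Lambda_t\,\big|\,\mathcal F^Y(t)\big]}\,,
\end{IEEEeqnarray*}
where $\E_0$ is expectation under $\P_0$. Both $q(X)\Lambda_t$ and $\Lambda_t$ are measurable functions of the pair $(X,Y(t))$, the variable $Y(t)$ is $\mathcal F^Y(t)$-measurable, and $X$ is $\P_0$-independent of $\mathcal F^Y(t)$; hence the substitution (``freezing'') rule for conditional expectations gives
\begin{IEEEeqnarray*}{rCl}
\E_0\big[q(X)\,\Lambda_t\,\big|\,\mathcal F^Y(t)\big]\,=\,\int_\R q(u)\,\exp\{uy-u^2t/2\}\,\mu(\vd u)\,\bigg|_{y=Y(t)}\,,
\end{IEEEeqnarray*}
and likewise for the denominator with $q\equiv1$; dividing gives the claim. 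The hypothesis $\int_\R|q|\,\vd\mu<\infty$ is exactly what makes this work: conditioning on $X$ shows $\E_0\big[|q(X)|\,\Lambda_t\big]=\E_0\big[|q(X)|\big]=\int_\R|q|\,\vd\mu<\infty$, while pathwise the numerator integral is finite because $\exp\{uy-u^2t/2\}\le\exp\{y^2/(2t)\}$ for $t>0$, and the denominator is strictly positive, being the integral of a strictly positive function against the probability measure $\mu$. (For $t=0$ the $\sigma$-algebra $\mathcal F^Y(0)$ is $\P$-trivial and both sides reduce to $\int_\R q\,\vd\mu$.)

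I expect the two places needing genuine care --- as opposed to routine bookkeeping --- to be the following. First, the Girsanov density contains the possibly unbounded random variable $X$, so one must verify it really is a $\P$-martingale (equivalently, that $\P_0$ above has total mass one) rather than merely a supermartingale; this is the point of the ``condition on $X$, then integrate against $\mu$'' argument, which, pleasantly, requires no integrability of $\mu$ at all. Second, $\mathcal F^Y(t)$ is the \emph{right-continuous augmentation} of the raw filtration generated by $\{Y(s):0\le s\le t\}$: the identity is first natural for the raw filtration, and one should note that adjoining $\P_0$-null sets and passing to right limits affects neither the $\mathcal F^Y(t)$-measurability of the right-hand side nor the $\P_0$-independence of $X$ from the observation $\sigma$-algebra (the latter by a monotone-class argument from the fact that $X$ is $\P_0$-independent of $\mathcal F^Y(s)$ for every finite $s$), so the formula passes to the augmented filtration unchanged. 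A by-product worth noting is that the right-hand side depends on the observed path only through its current level $Y(t)$, i.e.\ $Y(t)$ is a sufficient statistic for $X$; an alternative, more hands-on proof would establish this sufficiency first and then invoke the elementary Bayes theorem for the explicit Gaussian density of $Y(t)$ given $X=u$ --- but the change-of-measure route sketched here is cleaner and more self-contained.
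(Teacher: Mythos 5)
Your argument is correct and is essentially the proof behind the paper's citation: the paper itself gives no proof of Proposition \ref{T:integrals}, referring instead to \cite[Proposition 3.16]{BC09}, whose derivation is exactly the Kallianpur--Striebel route you follow (Girsanov removal of the random drift, independence of $X$ and $Y$ under the reference measure, then the abstract Bayes formula with the freezing lemma). The only imprecise point is the parenthetical claim that $X$ is $\P_0$-independent of $\mathcal F^Y(s)$ for every finite $s$ --- this fails for $s>t$ under your time-$t$ density, so the passage to the right-continuous augmentation should instead be done by applying the identity to the raw $\sigma$-algebras $\sigma\{Y(u):u\leq s\}$ for $s>t$ (each with its own time-$s$ reference measure) and letting $s\downarrow t$, using reverse martingale convergence together with path continuity and continuity of $(s,y)\mapsto G_q(s,y)$; this is a routine repair that does not affect the substance of your proof.
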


\bigskip
On the strength of  Proposition~\ref{T:integrals}, the conditional expectation of $X$ given the observations  up to   time  $t \in (0, \infty)$ is given as  
\begin{IEEEeqnarray}{rCl} 
\label{E:hX}
\widehat X(t) := \E[ X \, | \, \F^Y(t)]=G(t,Y(t))\,,
\end{IEEEeqnarray} 
  where for each $(t,y) \in [0, \infty) \times \R$ we set 
\begin{IEEEeqnarray}{rCl} 
\label{E:G(t,y)}
G(t,y) :=\frac{\int_\R u \, \exp\{uy -u^2t/2\} \, \mu(\vd u)}{\int_\R \, \exp\{uy -u^2t/2\} \, \mu(\vd u)}
=\int_\R u \,\mu_{t,y}(\vd u)
\end{IEEEeqnarray}
and
\begin{IEEEeqnarray}{rCl}
\label{E:mu}
\mu_{t,y}(A)\,:=\,\frac{\,\int_A \,  \exp\{uy -u^2t/2\} \,  \mu(\vd u)}{\int_\R \, \exp\{uy -u^2t/2\} \, \mu(\vd u)}\,, \qquad A \in {\cal B} (\R)\,.
\end{IEEEeqnarray}
This measure $\mu_{t,y}$ is the conditional (``posterior") distribution of $X$ at time $t$,  given the values $Y(s), \, 0 \le s < t$ and $Y (t)=y$ of the observation process up to that time.  

We have a similar computation for the conditional variance 
\[
\Var \big(X\vert \,\mathcal F^Y(t)\big)=\E \big[ \big(X-\widehat X(t) \big)^2\,\big \vert \, \mathcal F^Y(t) \big]=\E \big[X^2\, \big \vert \, \mathcal F^Y(t)\big]-\widehat X^2(t)=H(t,Y(t))
\]
of $X$ given the observations  up to   time  $t \in (0, \infty)$, where
\begin{IEEEeqnarray}{rCl}
\label{E:H(t,y)}
H(t,y)&\,:=\,& \frac{\int_\R u^2 \, \exp\{uy -u^2t/2\} \, \mu(\vd u)}{\int_\R \, \exp\{uy -u^2t/2\} \, \mu(\vd u)}-
\left(\frac{\int_\R u \, \exp\{uy -u^2t/2\} \, \mu(\vd u)}{\int_\R \, \exp\{uy -u^2t/2\} \, \mu(\vd u)}\right)^2  \\
&=& \int_\R u^2 \mu_{t,y}(\vd u)-\left( \int_\R u\, \mu_{t,y}(\vd u) \right)^2= \int_\R \left( u- G\big( t,y  \big)   \right)^2 \mu_{t,y}(\vd u) . \nonumber
\end{IEEEeqnarray}
It is straightforward from \eqref{E:G(t,y)}, \eqref{E:H(t,y)} that the quantities $G(t, y)$, $H(t, y)$  are, respectively,  the center of gravity and the second central moment of the measure $\mu_{t,y}$ in \eqref{E:mu}; to wit,  
$$
 G(t, y) = \E \big[  X \, \big \vert \,Y(s), \, 0 \le s < t;\, Y(t)=y  \big], \quad H(t, y) = \text{Var} \big[  X \, \big \vert \,Y(s), \, 0 \le s < t;\,Y(t)=y  \big].
$$

The function $G $ of \eqref{E:G(t,y)} is strictly increasing in its spatial variable, has partial derivatives of all orders on $(0,\infty)\times\R$, and   satisfies on this strip the {\it Backwards Burgers equation}
\begin{equation}
\label{GPDE}
\partial  G+\frac{1}{2}\, D^2G + G\cdot D G  
=0\,.
\end{equation}
The  gradient of the function $G$, i.e., the function $$H\, =\, D G$$ of \eqref{E:H(t,y)}, is    positive    on the strip $(0,\infty)\times\R$  and satisfies there    the equation
\begin{equation}
\label{HPDE}
\partial  H+\frac{1}{2}\, D^2 H + G\cdot D H + H^2  
=0\,.
\end{equation}
Here and throughout, we are denoting by $ \, \partial \equiv \partial / \partial t \,$ the partial derivative with respect to the  temporal argument $t$, and by $\, D^k \equiv \partial^k / \partial y^k \,$ the partial derivative of order $\,k=1,2, \cdots\,$ with respect to the spatial argument, in this case $y$.

\begin{remark} 
\label{bijections}
{
(Bijections.)}
{\rm  Let now $\mathcal{I}_\mu$ denote the interior of the smallest closed interval containing the support   of the probability measure $\mu$, i.e.,
\begin{IEEEeqnarray}{rCl} 
\label{E:supp}
\mathcal{I}_\mu \,=\, \big(\inf(\mathcal{S}_\mu), \,\sup(\mathcal{S}_\mu) \big)\qquad \text{with} \qquad \mathcal{S}_\mu := \supp (\mu)\,.
\end{IEEEeqnarray}
Then, for any given $t \in [ 0, \infty)$, the function $$G_t(\cdot) \, \equiv \, G(t,\cdot): \R \to \mathcal{I}_\mu\,$$ defined in \eqref{E:G(t,y)} is a 
strictly increasing, continuous bijection (see also \cite{EV}).  The strict increase of this function $G_t(\cdot) 
$ implies that   $Y(t) = G_t^{-1} \big(\widehat{X}(t)\big)$ holds for $0 \le t < \infty$. 

To wit, the observation processes $Y$ and the least-squares estimate process $\widehat{X}$ are bijections of each other pointwise in time, and thus generate the same filtration. In particular,    $    G_t^{-1}(x)$  is the unique value of the observation process $Y(t)$ at time $t$, that yields $\widehat X(t)=x$. 
}
\end{remark}

\begin{remark}\label{widder}
{
(The Widder Transform.)}
{\rm 
The derivation of the parabolic backwards partial differential equations \eqref{GPDE}, \eqref{HPDE} is facilitated by the observation that $G$ is itself the logarithmic gradient $ G = D \log F$ of the function 
\begin{IEEEeqnarray}{rCl} 
\label{E:F(t,y)}
F(t,y) \,:= \int_\R \, \exp \Big\{uy - \frac{t}{2}\, u^2  \Big\} \, \mu(\vd u) \,, \qquad (t,y) \in (0,\infty)\times\R\,
\end{IEEEeqnarray}
  that appears in the denominators of \eqref{E:G(t,y)}, \eqref{E:H(t,y)}. It is checked easily that this function, the so-called {\it Widder Transform} of the prior distribution $\mu$,  solves the backward heat equation $$\partial  F+\frac{1}{2}\, D^2 F\,=\,0\,.$$ Conversely, as shown by Widder 
\cite{W44} and 
 Robbins  \& Siegmund  \cite{RS73} , every positive solution of this backward heat  equation can be written in the form \eqref{E:F(t,y)}, in terms of an appropriate measure $\mu$ on 
${\cal B} (\R)$.  For a probabilistic treatment and development of the  relevant theory, see Section 3.4.B in \cite{KS1}.
}
\end{remark}

For technical convenience, we shall impose henceforth the following integrability condition:

\begin{assump}
\label{ass}
For some real number $a>0$, the prior distribution $\mu$ satisfies 
\begin{equation}
\label{integrability}
\int_\R \, \exp \{a u^2 \}\,    \mu(\vd u)<\infty\,.
\end{equation}
\end{assump}

\medskip
Assumption~\ref{ass} is a rather mild requirement   in the sense that, for any $t>0$, the integrability condition \eqref{integrability} is satisfied by the posterior distribution $\mu_{t,y}$   of any prior $\mu$.  
The assumption allows us to extend the definition of $\mu_{t,y}$ in \eqref{E:mu} above, to include
points of the type $(0,y)$; and the resulting $\mu_{0,y}$ coincides with the posterior distribution in a scenario with 
prior distribution 
\[
\xi(A)\,:= \, \frac{\, \int_A \,\exp \{a u^2 /2 \}\,   \mu(\vd u)\,}{\,\int_\R \, \exp \{a u^2 /2 \}\,  \mu(\vd u)\,}\,,\qquad A \in {\cal B} (\R)
\] 
conditional on  observing $Y(a)=y$. Consequently, the points $(0,y)$ can be regarded 
as interior points for a shifted problem started instead at time $-a$;  it is therefore clear that, 
for instance, \eqref{GPDE}  holds then on the whole domain $[0, \infty)\times\R$.

\subsection{Dynamics under the observations filtration}

The process
\begin{IEEEeqnarray}{rCl} \label{E:Innov}
\widehat W(t) := \,Y(t)  - \int_0^t \widehat X(s)   \ud s  \, = \int_0^t \big(X - \widehat X(s) \big)  \ud s + W(t)\, , \qquad 0 \le t < \infty\,,
\end{IEEEeqnarray}
known as the {\it innovation process} in the theory of filtering, is clearly adapted to the observations filtration $\mathbb{F}^{Y}$. It is also a  Wiener process  of this filtration, as it is continuous, an $\,\mathbb{F}^{Y}-$martingale, and has the right quadratic variation; for instance,  see \cite[Proposition 2.30 on p.~33]{BC09}. 

We write $\,\mathbb F^{\,\widehat W}=\big( \F^{\, \widehat W}(t) \big)_{0\leq t<\infty}\,$ for the right-continuous augmentation of the filtration 
$\sigma \big\{\widehat W(s) : 0\leq s \leq t \big\} \,, \,0\leq t<\infty \,$ that this process generates; similarly, we shall use the notation  $\mathbb F^{\,\widehat X}=\big( \F^{\, \widehat X}(t) \big)_{0\leq t<\infty}$ for the right-continuous augmentation of the filtration generated by  
the conditional expectation  process $\widehat X$ in \eqref{E:hX}, and $\mathbb F^{\,Y}=\big( \F^{\, Y}(t) \big)_{0\leq t<\infty}$ for the right-continuous augmentation of the filtration generated by the observation process $Y$. Clearly, 
and in light of Remark~\ref{bijections}, we have the comparisons $\, \mathbb F^{\,\widehat W} \subseteq \mathbb F^{\,Y} = \mathbb F^{\,\widehat X} \,.$

\smallskip
We deduce now from \eqref{E:hX}, \eqref{E:Innov} the representation for the observations process 
\begin{IEEEeqnarray}{rCl} \label{E:SDEY}
\ud Y(t)=\widehat X(t)\ud t + \ud \widehat W(t) = G \big( t, Y(t) \big) \ud t + \ud \widehat W(t)
\end{IEEEeqnarray}
as the solution of a stochastic differential equation driven by the innovations process $\widehat W$, with initial condition $Y(0)=0$. Because of the smoothness of the function $G$, this equation admits a pathwise unique, strong solution, so we deduce the filtration identities
\begin{IEEEeqnarray}{rCl} 
\label{E:Filtr_Ident}
\mathbb F^{\,\widehat W} =\mathbb F^{\,Y} =\mathbb F^{\,\widehat X}\,.
\end{IEEEeqnarray}
On the other hand, with the notation $G_t(\cdot)  = G(t, \cdot)$ already introduced in Remark~\ref{bijections}, we set
\begin{IEEEeqnarray}{rCl} 
\label{E:psi}
\Psi(t, x) \, := \, D G \big(t, G_t^{-1}(x)\big) \,  = \, H   \big(t, G_t^{-1}(x)\big).
\end{IEEEeqnarray}
An application of It\^o's formula to \eqref{E:hX}    yields, in conjunction with \eqref{GPDE} and \eqref{E:SDEY}, a stochastic differential equation for the conditional mean process $\widehat X $ of \eqref{E:hX}, namely, 
\begin{IEEEeqnarray}{rCl} 
\label{E:SDEX}
\ud \widehat X(t) \,=\, \Psi \big( t, \widehat X(t) \big) \ud \widehat{W}(t)\,, \qquad \widehat X(0) =\, \E (X)=\int_\R u\, \mu(\vd u)  \, .
\end{IEEEeqnarray}

The function $\Psi$ of \eqref{E:psi}, the dispersion function of the stochastic differential equation   right above, can be expressed as 
\[ \Psi(t,x) =  H\big(t,G_t^{-1}(x) \big)= \text{Var} \big[  X \, \big \vert \,\widehat{X}(s), \, 0 \le s < t\,;\,\widehat{X}(t)=x  \big]\,, \]
and a bit more generally 
\begin{IEEEeqnarray}{rCl} 
\label{E:VARt}
 \Psi \big(t,\widehat{X} (t) \big)\,=\, 
  H\big(t,G_t^{-1}(\widehat{X} (t)) \big)\,=\,H \big( t, Y(t) \big)\,=\,  \text{Var} \big[  X \, \big \vert \,  {\cal F}^Y (t)  \big].
\end{IEEEeqnarray}

Furthermore, it is checked with the help of \eqref{GPDE}, \eqref{HPDE} that the function $\Psi >0$ of \eqref{E:psi} satisfies on the strip $(0,\infty)\times \mathcal{I}_\mu$  the fully nonlinear, backwards parabolic equation   
\begin{equation}
\label{PsiPDE}
 \partial  \Psi + \Psi^2 \left(  \frac{1}{2} \, 
 D^2 \Psi +1  \right) 
 =0\,.
\end{equation}
Once again, we denote differentiation with respect to the temporal argument $t$   by $\partial$, and differentiation with respect to the spatial argument (in this case  $x$)  by $D$. 

Finally, we recall from \cite[Proposition 3.6]{EV} the following result about the function $\Psi$.

\begin{proposition}[Properties of the dispersion function $\Psi$] \item \label{T:psigr}
\begin{enumerate} 
\item\label{T:psigr1}
$\partial  \Psi \le 0\,;$  consequently, by \eqref{PsiPDE}, we have $\,D^2 \Psi \geq -2\,$.
\item
If $\mu$ is compactly supported, then the function $\Psi$ is bounded.
\end{enumerate} 
\end{proposition}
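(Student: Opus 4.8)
The plan is to establish both parts by working directly with the probabilistic representation $\Psi(t,\widehat X(t)) = \Var[X \mid \mathcal F^Y(t)]$ from \eqref{E:VARt}, together with the explicit posterior formulas of Proposition~\ref{T:integrals}. For part~\ref{T:psigr1}, the key observation is that the process $t \mapsto \Var(X \mid \mathcal F^Y(t))$ is a supermartingale: indeed, $\E[X^2 \mid \mathcal F^Y(t)]$ is a martingale, while $\widehat X^2(t)$ is a submartingale (being the square of the martingale $\widehat X$), so their difference $H(t,Y(t)) = \Var(X\mid \mathcal F^Y(t))$ is a supermartingale. Combined with the fact that, conditionally on $\mathcal F^Y(t)$, the future evolution of the posterior is that of a fresh filtering problem with prior $\mu_{t,Y(t)}$ observed over an additional time horizon, one gets $\Psi(t+h, x) \le \Psi(t,x)$ for the corresponding posterior mean value $x$; a careful bookkeeping of which spatial point corresponds to which posterior, using the bijection $G_t$ of Remark~\ref{bijections}, then yields $\partial\Psi \le 0$ pointwise on $(0,\infty)\times\mathcal I_\mu$. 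The inequality $D^2\Psi \ge -2$ is then immediate from the PDE~\eqref{PsiPDE}: since $\Psi>0$ and $\partial\Psi\le 0$, we must have $\Psi^2(\tfrac12 D^2\Psi + 1)\ge 0$, hence $\tfrac12 D^2\Psi + 1 \ge 0$.

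For part~2, assume $\mu$ is compactly supported, say $\supp(\mu)\subseteq[-M,M]$. Then the posterior $\mu_{t,y}$ is also supported in $[-M,M]$ for every $(t,y)$, since $\mu_{t,y}$ is absolutely continuous with respect to $\mu$ by \eqref{E:mu}. Consequently $H(t,y) = \Var(\mu_{t,y}) \le M^2$ uniformly, because the variance of any probability measure supported on an interval of length $2M$ is bounded by $M^2$. Since $\Psi(t,x) = H(t, G_t^{-1}(x))$ by \eqref{E:psi}, we conclude $\Psi \le M^2$ on all of $(0,\infty)\times\mathcal I_\mu$, so $\Psi$ is bounded.

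The main obstacle is the careful argument in part~\ref{T:psigr1} for why the supermartingale property of $\Var(X\mid\mathcal F^Y(t))$ translates into the \emph{pointwise} inequality $\partial\Psi(t,x)\le 0$ rather than merely a statement about expectations along trajectories. The cleanest route is the "restart" identity: fix $(t,x)$ with $x = G_t(y)$ for the unique $y = G_t^{-1}(x)$; then $H(t,y) = \Var(\nu)$ where $\nu := \mu_{t,y}$, and for $h>0$ one has $H(t+h, y) = \E_\nu[\Var(X\mid Y'(h))]$ where $Y'$ is the observation process of the filtering problem with prior $\nu$ — this is because $\exp\{u y - u^2(t+h)/2\}$ factors as $\exp\{uy - u^2 t/2\}\exp\{-u^2 h/2\}$, so the density defining $\mu_{t+h,y}$ relative to $\mu$ equals, up to normalization, the density defining $\mu_{h,0}$ relative to $\nu$. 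Since conditioning reduces variance, $\E_\nu[\Var(X\mid Y'(h))] \le \Var_\nu(X) = H(t,y)$, giving $H(t+h,y)\le H(t,y)$; but one must still check this is the correct comparison for $\Psi$, i.e. that it is legitimate to hold $y$ fixed. In fact $\partial\Psi(t,x) = \partial H(t,y) + D H(t,y)\cdot \partial_t\big(G_t^{-1}(x)\big)$, so holding $x$ fixed while differentiating introduces an extra term; the resolution is that Proposition~\ref{T:psigr} is quoted from \cite[Proposition 3.6]{EV}, so one may simply cite that reference, but if a self-contained argument is wanted, one should instead differentiate \eqref{PsiPDE}-compatible quantities or invoke that $\Psi(t,x)$ is the value function of an auxiliary problem that is manifestly decreasing in $t$. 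Given the excerpt explicitly says "we recall from \cite{EV}", the intended proof is simply the citation, and the above supplies the underlying reasoning.
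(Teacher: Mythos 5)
The paper itself offers no proof of this proposition: it is quoted verbatim from \cite[Proposition 3.6]{EV}, so your final fallback to that citation coincides exactly with the paper's treatment. Within your own reasoning, part 2 is correct and complete ($\supp(\mu_{t,y})\subseteq\supp(\mu)\subseteq[-M,M]$ by \eqref{E:mu}, hence $H\leq M^2$, hence $\Psi\leq M^2$), and the deduction $D^2\Psi\geq-2$ from $\partial\Psi\leq 0$ together with \eqref{PsiPDE} is exactly the intended one.

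The sketch you offer for the core claim $\partial\Psi\leq 0$, however, does not hold up, and you partly concede this. Two concrete problems. First, the identity $H(t+h,y)=\E_\nu[\Var(X\mid Y'(h))]$ is false: the factorization $e^{uy-u^2(t+h)/2}=e^{uy-u^2t/2}\,e^{-u^2h/2}$ shows that $\mu_{t+h,y}=\nu_{h,0}$, i.e.\ the posterior in the $\nu$-problem corresponding to the \emph{single} observation outcome $Y'(h)=0$, not an average over outcomes; ``conditioning reduces variance'' controls only the averaged quantity $\E_\nu[\Var(X\mid Y'(h))]$, and the variance of one particular posterior can well exceed $\Var(\nu)$ (think of a bimodal $\nu$ and an observation near the midpoint), so the pointwise bound $H(t+h,y)\leq H(t,y)$ does not follow. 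Second, as you note yourself, even a fixed-$y$ bound is not the statement $\partial\Psi\leq 0$, which is taken at fixed $x$, and the extra term $DH\cdot\partial_t G_t^{-1}(x)$ is not negligible. The opening supermartingale observation cannot rescue this either: by It\^o and \eqref{PsiPDE} the drift of $\Psi(t,\widehat X(t))$ equals $\partial\Psi+\tfrac12\Psi^2D^2\Psi=-\Psi^2\leq 0$ regardless of the sign of $\partial\Psi$, so the supermartingale property carries no information about $\partial\Psi$. If a self-contained argument is wanted instead of the citation, one workable route is analytic: differentiating $\Psi(t,x)=H\big(t,G_t^{-1}(x)\big)$ in $t$ and using \eqref{GPDE}, \eqref{HPDE} with $H=DG$ gives $-2H\,\partial\Psi=H\,D^2H-(DH)^2+2H^3$ evaluated at $y=G_t^{-1}(x)$; since $H$, $DH$, $D^2H$ are the second, third and fourth cumulants of $\mu_{t,y}$ (central moments $m_2$, $m_3$, $m_4-3m_2^2$), this equals $m_2m_4-m_3^2-m_2^3$, which is nonnegative because the covariance matrix of the pair $\big(X-m,(X-m)^2\big)$ under $\mu_{t,y}$ is positive semidefinite. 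This closes the gap that your probabilistic sketch leaves open.
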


\section{Optimal Stopping}
\label{sec3}

The above considerations show that the optimal stopping problem \eqref{E:OSP} can be cast in the form 
\begin{IEEEeqnarray}{rCl}
 \label{E:OSP2}
\inf_{\tau\in \mathcal T
} \, \E \, \Big[ \Psi \big(\tau,\widehat X(\tau)\big) +c\tau \, \Big]\,.
\end{IEEEeqnarray} 
Here  $\Psi$ is the function of \eqref{E:psi},    the process $\widehat X$ satisfies the dynamics of \eqref{E:SDEX}, and $ \mathcal T
$ stands for the collection of stopping times of the filtration $\mathbb F^{\,\widehat X}=\mathbb F^{\,\widehat W} =\mathbb F^{\,Y}$, as in \eqref{E:Filtr_Ident}. 

\medskip
It is a noteworthy feature of this problem, that the same function $\Psi$ of (\ref{E:psi}) appears both as the dispersion of the diffusion $\widehat{X}$ in (\ref{E:SDEX}), {\it and} as the cost function for the new formulation of the optimal stopping problem in (\ref{E:OSP2}). This feature makes the problem rather special, and aids considerably  its analysis in Sections 4, 5. 

\begin{proposition}
\label{reformulation}
For every stopping time $\tau\in\mathcal T
$ we have 
\begin{IEEEeqnarray}{rCl} 
\label{E:equivalence}
\E \big[\Psi(\tau,\widehat X(\tau)\big)\big] = \Var (X) -\E\left[\int_0^\tau \Psi^2 \big(s,\widehat X(s) \big)\ud s\right].
\end{IEEEeqnarray}
\end{proposition}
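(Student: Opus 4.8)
The plan is to apply Itô's formula to the process $t \mapsto \Psi\big(t,\widehat X(t)\big)$ and then take expectations, exploiting the fact that $\Psi$ satisfies the backwards parabolic equation \eqref{PsiPDE}. Concretely, since $\widehat X$ has the dynamics $\ud \widehat X(t) = \Psi\big(t,\widehat X(t)\big)\ud\widehat W(t)$ from \eqref{E:SDEX}, Itô's formula gives
\[
\Psi\big(\tau,\widehat X(\tau)\big) = \Psi\big(0,\widehat X(0)\big) + \int_0^\tau \Big(\partial\Psi + \tfrac12\,\Psi^2 D^2\Psi\Big)\big(s,\widehat X(s)\big)\,\ud s + \int_0^\tau \Psi\,D\Psi\,\big(s,\widehat X(s)\big)\,\ud\widehat W(s).
\]
By \eqref{PsiPDE}, the drift integrand equals $-\Psi^2$, so the Lebesgue integral collapses to $-\int_0^\tau \Psi^2\big(s,\widehat X(s)\big)\,\ud s$. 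Next I would identify the constant term: $\widehat X(0)=\E(X)$ and, evaluating \eqref{E:psi} at $t=0$ (legitimate thanks to Assumption~\ref{ass}, which lets us treat $(0,y)$ as an interior point), $\Psi\big(0,\widehat X(0)\big) = H\big(0,G_0^{-1}(\E(X))\big) = H(0,0) = \Var(X)$, since $\mu_{0,0}=\mu$. Taking expectations and using that the stochastic integral is a true martingale (so its expectation vanishes) would then yield exactly \eqref{E:equivalence}.

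The main obstacle is justifying that the stochastic integral $\int_0^\tau \Psi\,D\Psi\,\big(s,\widehat X(s)\big)\,\ud\widehat W(s)$ has zero expectation — i.e., that it is a genuine martingale and not merely a local martingale — together with the attendant integrability needed to take expectations termwise. One clean route: localize by a sequence $\tau_n\uparrow\tau$ of bounded stopping times for which the stochastic integral up to $\tau_n$ is a martingale, obtaining the identity with $\tau$ replaced by $\tau_n$; then pass to the limit. On the right-hand side, $\int_0^{\tau_n}\Psi^2\,\ud s \uparrow \int_0^\tau \Psi^2\,\ud s$ monotonically, so monotone convergence applies; on the left, $\Psi\le \Psi(0,\cdot)$ is bounded above along the relevant range by Proposition~\ref{T:psigr}\eqref{T:psigr1} (since $\partial\Psi\le0$ forces $\Psi(s,\cdot)\le\Psi(0,\cdot)$, and $\Psi(0,\cdot)=H(0,\cdot)$ is continuous, hence locally bounded), and $\Psi\ge 0$, so $0 \le \Psi\big(\tau_n,\widehat X(\tau_n)\big)$ is dominated; alternatively, one observes $\Psi\big(\tau_n,\widehat X(\tau_n)\big) = \Var(X) - \int_0^{\tau_n}\Psi^2\,\ud s \le \Var(X)$ directly from the localized identity, which gives a uniform bound and lets dominated convergence finish the job. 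Combined with $\E[\tau]<\infty$ from \eqref{13} when needed, this closes the argument.

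A subtlety worth flagging in the writeup: one should either work first with the class of stopping times satisfying \eqref{13} and note that $\E\big[\int_0^\tau\Psi^2\,\ud s\big]$ may a priori be $+\infty$ for general $\tau\in\mathcal T$, in which case \eqref{E:equivalence} holds as an identity in $[-\infty,\Var(X)]$ — but the localization above actually shows the left side is always in $[0,\Var(X)]$, forcing $\E\big[\int_0^\tau\Psi^2\,\ud s\big]\le\Var(X)<\infty$ for every $\tau$, so no such caveat is ultimately needed. I would present the localization argument explicitly, since that is where the only real content lies; the PDE cancellation via \eqref{PsiPDE} and the boundary evaluation $\Psi(0,0)=\Var(X)$ are then immediate.
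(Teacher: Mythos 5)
Your It\^o/PDE computation and the localization itself are fine, and in fact they reproduce the first step of the paper's proof (the process $M(t)=\Psi(t,\widehat X(t))+\int_0^t\Psi^2(s,\widehat X(s))\,\ud s=\Var(X)+\int_0^t(\Psi D\Psi)(s,\widehat X(s))\,\ud\widehat W(s)$ is a positive local martingale). The gap is exactly where you try to pass to the limit. You need $\E\big[\Psi(\tau_n,\widehat X(\tau_n))\big]\to\E\big[\Psi(\tau,\widehat X(\tau))\big]$, i.e.\ uniform integrability of $\{\Psi(\tau_n,\widehat X(\tau_n))\}_n$, and neither of your two dominations provides it. The ``alternative'' bound $\Psi(\tau_n,\widehat X(\tau_n))=\Var(X)-\int_0^{\tau_n}\Psi^2\,\ud s\le\Var(X)$ is not a pathwise identity: the localized It\^o identity still contains the stochastic-integral term, which you have silently dropped; it becomes an identity only after taking expectations, and a bound on expectations is not a dominating function. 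The other bound, $\Psi(s,x)\le\Psi(0,x)$ with $\Psi(0,\cdot)$ ``continuous, hence locally bounded'', dominates nothing when $\mathcal I_\mu$ is unbounded: $\widehat X(\tau_n)$ is not confined to a compact set, and $\Psi(0,\cdot)$ is guaranteed to be bounded only for compactly supported priors (Proposition~\ref{T:psigr}). Without uniform integrability, Fatou's lemma gives only the inequality $\E\big[\Psi(\tau,\widehat X(\tau))\big]+\E\int_0^\tau\Psi^2\,\ud s\le\Var(X)$, which is precisely the supermartingale bound \eqref{32a} --- the easy half. Since a positive local martingale is in general a \emph{strict} supermartingale, the equality at every stopping time is the actual content of Proposition~\ref{reformulation} and cannot be extracted from localization alone.

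The paper closes this gap by probabilistic, not analytic, means: by \eqref{E:VARt}, $\Psi(\tau,\widehat X(\tau))=\Var\big(X\,\vert\,\mathcal F^Y(\tau)\big)$; since $X$ is $\mathcal F^Y(\infty)$-measurable and square-integrable, the martingale $\widehat X$ and submartingale $\widehat X^2$ are uniformly integrable, $X-\widehat X(\tau)=\int_\tau^\infty\Psi(s,\widehat X(s))\,\ud\widehat W(s)$, whence $\E\big[\Psi(\tau,\widehat X(\tau))\big]=\E\int_\tau^\infty\Psi^2\,\ud s$ and $\Var\big(\widehat X(\tau)\big)=\E\int_0^\tau\Psi^2\,\ud s$ by the It\^o isometry; the law of total variance then yields \eqref{E:equivalence}. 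If you want to salvage your route, the cheapest repair is to note $0\le\Psi(\tau_n,\widehat X(\tau_n))=\Var\big(X\,\vert\,\mathcal F^Y(\tau_n)\big)\le\E\big[X^2\,\vert\,\mathcal F^Y(\tau_n)\big]$, and optional sampling of the uniformly integrable closed martingale $\E\big[X^2\,\vert\,\mathcal F^Y(\cdot)\big]$ gives the required uniform integrability --- but observe that this invokes exactly the identification \eqref{E:VARt} of $\Psi$ as a conditional variance, which your purely PDE-based treatment of $\Psi$ never uses; some such input is indispensable.
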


\begin{proof}
From \eqref{E:Y} and the strong law of large numbers for the Wiener process, we have $ \lim_{t \to \infty} \big( Y(t) / t \big) = X$, a.e.; in other words,  the random variable $X$ is ${\cal F}^Y (\infty)$-measurable. As a result, the P.\,L\'evy martingale convergence theorem gives $\lim_{t \to \infty} \E \big( X^k | {\cal F}^Y (t) \big)  = \E \big( X^k | {\cal F}^Y (\infty) \big) = X^k$ a.e.,    for $k=1,2$; therefore  also 
$$
\lim_{t \to \infty}  \Psi \big(t,\widehat X (t)\big)\,=\, \lim_{t \to \infty} \bigg(  \E \big( X^2 \, | \,{\cal F}^Y (t) \big)  - \Big( \E \big( X \,  | \,{\cal F}^Y (t) \big) \Big)^2 \bigg) \,=\,0 
$$
  on the strength of  \eqref{E:VARt}.   Now it follows from the dynamics in \eqref{E:SDEX},   the partial differential equation in \eqref{PsiPDE}, and  elementary stochastic calculus, that the positive process
\begin{eqnarray*}
M(t) &:=& \,\Psi \big(t,\widehat X (t)\big) + \int^t_0 \Psi^2 \big(s,\widehat X (s)\big)\ud s\\
&=&  \text{Var} (X) + \int^t_0 \Psi  \big(s,\widehat X (s)\big)D \Psi  \big(s,\widehat X (s)\big)\ud \widehat{W}(s), \quad 0 \le t < \infty
\end{eqnarray*}
is a local martingale. It is thus also a supermartingale, and consequently 
\begin{equation}
\label{32a}
\E \left( \Psi \big(\tau,\widehat X (\tau)\big) + \int^\tau_0 \Psi^2 \big(s,\widehat X (s)\big)\ud s \right) \le \text{Var} (X) <\infty
\end{equation}
holds by the optional sampling theorem  for every stopping time $ \tau \in {\cal T} $; this  includes $\tau = \infty$, so we have also 
\begin{equation}
\label{32b}
\E \left( \int^\infty_0 \Psi^2  (s,\widehat X (s) )\ud s \right) \le \text{Var} (X)<\infty\,.
\end{equation}
We shall show presently that, as claimed in \eqref{E:equivalence}, the first inequalities 
in \eqref{32a} and \eqref{32b} hold actually as equalities. 

In order to see these things, let us start from the observation that the representation 
\begin{IEEEeqnarray}{rCl} 
\label{E:SDEXext}
 \widehat X (\tau) \,=\, \E (X) +  \int^\tau_0  \Psi  \big(s,\widehat X (s)\big)\ud \widehat{W}(s) 
\end{IEEEeqnarray} 
from \eqref{E:SDEX} holds for every stopping time $\tau \in {\cal T}$, including $\,\tau = \infty\,$: the martingale $\widehat X$ and the submartingale $\widehat X^2$ are both uniformly integrable. Thus, the representation 
$$X-\widehat X (\tau) \,=\,   \int_\tau^\infty   \Psi  \big(s,\widehat X (s)\big)\ud \widehat{W}(s)$$ holds, as does the analogue 
\begin{eqnarray*}
\text{Var} \big( X \,\big| \, {\cal F}^Y (\tau) \big) &=& \E \Big[ \big( X - \widehat{X}(\tau ) \big)^2 \, \big| \, {\cal F}^Y (\tau) \Big] = \E \left( \int_\tau^\infty \Psi^2 \big(s,\widehat X (s)\big)\ud s\, \Big| \, {\cal F}^Y (\tau) \right)\\
&=& \Psi \big( \tau,   \widehat{X}(\tau ) \big)
\end{eqnarray*}
of \eqref{E:VARt}; for the second equality we have used the finite upper bound of \eqref{32b}. This, in turn, leads to 
$$
\E \Big[ \text{Var} \big( X \,\big| \, {\cal F}^Y (\tau) \big) \Big]=\, \E   \big( X - \widehat{X}(\tau ) \big)^2    = \,\E \, \Psi \big( \tau,   \widehat{X}(\tau ) \big)=\, \E   \int_\tau^\infty   \Psi^2 \big(s,\widehat X (s)\big)\ud s
$$
upon taking expectations. In addition, \eqref{E:SDEXext} gives 
$$
\text{Var} \Big(  \E \big( X \,\big| \, {\cal F}^Y (\tau) \big) \Big) \,=\, \text{Var}  \big( \widehat{X}(\tau )  \big)=\, \E    \int^\tau_0      \Psi^2 \big(s,\widehat X (s)\big)\ud s\,,
$$
once again using the finite upper bound in \eqref{32b}.
From these computations, and from a classical identity about variances, we deduce
$$
\text{Var}  \big( X  \big)\,=\,\E \Big[ \text{Var} \big( X \,\big| \, {\cal F}^Y (\tau) \big) \Big] + \text{Var} \Big( \E \big( X \,\big| \, {\cal F}^Y (\tau) \big) \Big)
$$
$$
~~~~~~~\,=\, \E \left( \Psi \big( \tau,   \widehat{X}(\tau ) \big) + \int^\tau_0 \Psi^2 \big(s,\widehat X (s)\big)\ud s \right),
$$
that is, our claim \eqref{E:equivalence}. With the choice $\tau = \infty$, these considerations give the identity  $ \E  \int^\infty_0 \Psi^2  (s,\widehat X (s) )\ud s  = \text{Var} (X)$, as claimed.
\end{proof}

The identity \eqref{E:equivalence} allows us now to cast the optimal stopping problem  of \eqref{E:OSP}/\eqref{E:OSP2} in the equivalent form
\begin{IEEEeqnarray}{rCl} \label{E:equivalentOSP}
v:=\inf_{\tau\in \mathcal T 
}  \E\left[\int_0^\tau \Big( c-\Psi^2(s,\widehat X(s)) \Big) \ud s\right].
\end{IEEEeqnarray}

\subsection{Markovian framework}
\label{markov}

To study the optimal stopping problem in its new form \eqref{E:equivalentOSP}, we first embed it   
into a Markovian framework,
by allowing the diffusion $\widehat X$ of \eqref{E:SDEX} to start at any given point $(t,x)\in[0,\infty)\times \mathcal{I}_\mu$. More precisely, we define the function $v:[0,\infty)\times \mathcal{I}_\mu\to (-\infty, 0]$ via
\begin{IEEEeqnarray}{rCl} 
\label{E:v}
v(t,x)\,:=\,\inf_{\tau\in\mathcal T
} \, \E\left[\int_0^\tau \Big( c-\Psi^2 \big( t+s,\widehat X^{t,x}(t+s) \big) \Big)\ud s\right],
\end{IEEEeqnarray}
where the dynamics of the process $\widehat X=\widehat X^{(t,x)}$ are given by
\begin{IEEEeqnarray*}{rCl}
\left\{\begin{array}{l}
\ud \widehat X(t+s) = \Psi \big( t+s, \widehat X(t+s)\big) \ud \widehat{W}(s)\\
\widehat X(t)=x\end{array}\right.
\end{IEEEeqnarray*}
and $\widehat W$ is again standard scalar Brownian motion.
Since $\tau=0$ is an admissible stopping time, the value function $v$ in \eqref{E:v} is non-positive:   $v\leq 0$. On the other hand, it is clear from (3.1), (3.2) that $\, v(t,x) \ge -\text{Var}(X) > - \infty\,$, so $v$ is also real-valued, as indicated. 

In accordance with standard optimal stopping theory for Markov processes with continuous paths, we introduce the so-called continuation region
\[\mathcal C:=\{ (t,x)\in [0,\infty)\times \mathcal{I}_\mu  : \, v(t,x)<0\}\]
and its complement, the stopping region 
\[\mathcal D:=\{(t,x)\in [0,\infty)\times \mathcal{I}_\mu :\, v(t,x)=0\}.\]
Moreover, for any given starting point $(t,x)$, we denote by 
\[
\tau^{(t,x)}:=\inf \big\{s\geq 0: \big(t+s, \widehat X^{(t,x)}(t+s) \big)\in\mathcal D \big\}
\]
the first hitting time of the stopping region.
Then we know (for instance,  \cite{Fak}, \cite{vM76}, \cite{Shir}, \cite{ElK}) that the function $v:[0,\infty)\times \mathcal{I}_\mu\to[-\text{Var}(X),0]$ of \eqref{E:v} is upper-semicontinuous, and that for each $(t,x)\in [0,\infty)\times \mathcal{I}_\mu $ the stopping time $\tau^{(t,x)}$ attains  the infimum there, i.e., 
\[
v(t,x)=\E\left[\int_0^{\tau^{(t,x)}} \Big( c-\Psi^2 \big(t+s,\widehat X^{(t,x)}(t+s)\big) \Big) \ud s\right].
\]

\begin{remark}
\label{stop_now}
{\rm
It is clear from the formulation \eqref{E:v} that immediate stopping ($\tau^{(t,x)}=0$) is optimal, if the inequality 
\begin{IEEEeqnarray}{rCl}
\label{stop_reg}
c\geq \sup_{(t,x)\in[0,\infty)\times \mathcal I_\mu}\Psi^2(t,x)=\sup_{x  \in \mathcal I_\mu}\Psi^2(0,x) 
\end{IEEEeqnarray}
holds; here, the equality follows from Proposition~\ref{T:psigr}. A bit more generally, if 
\[
c\geq \sup_{x \in\mathcal I_\mu}\Psi^2(T_c,x)
\]
holds for some $T_c\in(0,\infty)$, then the strip $[T_c,\infty)\times \mathcal I_\mu$ belongs to the stopping region 
$\mathcal D$.
}
\end{remark}

\subsection{A very simple special case: The Gaussian distribution}
\label{gauss}

As the simplest illustration, let us consider the Gaussian prior distribution $\mu$ with mean $m\in\R$ and variance $\sigma^2\in(0,\infty)$, i.e.,
\[
\mu(\vd u)=\frac{1}{\sqrt{2\pi \sigma^2}}\, \exp\bigg\{-\frac{(u-m)^2}{2\sigma^2} \bigg\}\, \vd u\,,
\]
  a special case of the Kalman-Bucy filter. 
Here we have $\mathcal{I}_\mu=\R$, and the functions $\, F$, $G$, $H$ and $\Psi$ take the respective forms
$$
F(t,y)\,=\, \frac{1}{
\sqrt{1+ \sigma^2 t}}\, \exp\bigg\{ \,- \frac{1}{2\sigma^2} \, \bigg( \frac{(m + \sigma^2 y)^2}{1+\sigma^2 t}- m^2\bigg) \bigg\}
$$
\[
G(t,y) \,= \,\frac{m+\sigma^2y}{1+\sigma^2 t}\,, \qquad 
H(t,y)\,=\, \Psi(t,x)\,=\,\frac{\sigma^2}{1+\sigma^2 t}\,=\,:\xi(t)\,.
\]
Now, the function $\,t \mapsto c-\xi^2(t)\,$ is negative for $ t \in \big[0,\frac{1}{\sqrt{c\,}\,}-\frac{1}{\sigma^2}\big)$ if $\,\sqrt{c\,}<\sigma^2$, and it is everywhere non-negative if
$\,\sqrt{c\,}\geq \sigma^2$. With $$\tau_*\,:= \,\bigg(\frac{1}{\sqrt{c\,}\,}-\frac{1}{\sigma^2}\bigg)^+ ,$$ it follows from Remark \ref{stop_now} that the above constant
$\tau_*$ is an optimal (albeit trivial!) stopping time in \eqref{E:equivalentOSP}.

In \cite{CNS}, a similar result is obtained in the case when $W$ is a fractional Brownian motion.

\section{A time-homogeneous case: the Bernoulli Distribution
}
\label{Bernoulli}

As our second example, let us   consider now the Bernoulli prior distribution 
\[\mu=(1-p)\delta_{-\beta}+p\delta_{\beta}\]
with symmetric support,
where $p\in(0,1)$ and $\beta\in(0,\infty)$. 
In this case we have $\mathcal{I}_\mu = (- \beta, \beta)$, as well as 
$$
G(t, y) \,=\, \beta \, \frac{\, p \, e^{\beta y}- (1-p) \, e^{-\beta y} \, }{\, p \, e^{\beta y} + (1-p) \, e^{-\beta y} \, }\,, \qquad \text{thus} \qquad H(t,y) \,=\, \beta^2 - G^2 (t,y)
$$
and 
\[\Psi(t,x)\,=\,\beta^2-x^2 \,=\,:\psi(x)\,.\]
We note that we are here at the opposite extreme of the example in subsection~\ref{gauss}: All these are functions 
of only   the spatial variable; and the last of them does not even depend on the parameter $p \in (0,1)$. 

The stopping problem \eqref{E:equivalentOSP} thus takes the form
\begin{IEEEeqnarray}{rCl}
\label{bernoulli_value}
v(x)\,=\,
\inf_{\tau\in \mathcal T
}  \, \E\left[\int_0^\tau \Big( c-\psi^2 \big(\widehat X(t)\big) \Big) \ud t  \right]
\end{IEEEeqnarray}
where $\widehat{X}$ is a diffusion in  natural scale, with state-space $\,\mathcal{I}_\mu = (- \beta, \beta)$ and  initial condition  $\,x \in \mathcal{I}_\mu\,$: 
\begin{IEEEeqnarray}{rCl}
\label{bernoulli}
\left\{\begin{array}{l}
\ud \widehat X(t) = \psi \big( \widehat X(t) \big) \ud \widehat{W}(t)\\
~~\widehat X(0)=\beta(2p-1)=:x \in \mathcal{I}_\mu\,.\end{array}\right.
\end{IEEEeqnarray}
We note that for $\beta^4\leq c$, the integrand in \eqref{bernoulli_value} is non-negative, and hence the trivial stopping time $\tau_* \equiv 0$
is optimal. 

Thus, we assume from now onwards that $$\beta^4\,> \, c  \,\,;$$ then $c-\psi^2(x)$ is negative for $| x| <  \gamma\,$ with  $$\, \gamma \,:= \,\sqrt{\beta^2-\sqrt {c\,}\,}\,,$$ zero for $|x|= \gamma$, and positive for $\, \gamma < |x| < \beta\,$. 
Conjecturing that
an optimal stopping rule is of the type 
\begin{equation}
\label{E:opt_stop_time}
\tau_a^* \,:= \, \inf \big\{t\geq 0:\vert \widehat{X} (t) \vert\geq a \big\}
\end{equation}
for some constant $a\in(\gamma,\beta)$, general optimal stopping theory leads to the following free-boundary problem:  

\smallskip
{\it To find a  constant $a\in(\gamma,\beta)$ and an evenly symmetric  function $u:(-\beta,\beta)\to(-\infty,0]$ of class $\C^1 \big((-\beta,\beta)\big)\cap\C^2 \big((-\beta,\beta)\setminus\{-a,a\}\big),$ such that
\begin{equation}
\label{u}
\left\{\begin{array}{lll}
u(x)<0\,, & ~\big(\psi^2(x) /2\big)u''(x)+c-\psi^2(x)=0\,; & ~~x\in [0,a),\\
u(x)=0\,, & ~\,c-\psi^2(x) > 0\,; &~~ x\in [a,\beta);\end{array}\right.
\end{equation}
and then to argue that the function $u$ coincides with the minimum expected cost $v$ in \eqref{bernoulli_value}.} 	

\medskip
In the two paragraphs that follow  we shall show that this problem admits a unique solution, which coincides with the value function $v$ of (\ref{bernoulli_value}) and can be computed explicitly.

\subsection{Verification}

Indeed, if such a function $u$ with the above properties exists, the process 
$$
N \,:= \,u \big( \widehat{X}\big) - u(x) - \int_0^\cdot \frac{1}{2} \big( \psi^2u^{\prime \prime}  \big) \big( \widehat{X} (t)\big) \mathrm{d} t \,= \int_0^\cdot  \big(  \psi u^{\prime  }  \big) \big( \widehat{X} (t)\big) \mathrm{d}\widehat{W} (t)
$$
is a local martingale. The function $\psi u^{\prime  }$ is continuous, and supported on the compact interval $[-a,a]$, thus bounded. Therefore, for any stopping time $\tau \in {\cal T}$ with $\,\E (\tau) < \infty$ as in \eqref{13}, we have 
$$
\E \big( N^2 (\tau) \big) \,=\,\E \int_0^\tau \big( \psi u^{\prime  }  \big)^2 \big( \widehat{X} (t)\big) \mathrm{d}t \,\le\, \big| \big|  \psi u^{\prime  }  \big| \big|_\infty^2 \,\,\E (\tau)\, <\, \infty \,.
$$
As a consequence, $N (\cdot \wedge \tau)$ is a square-integrable martingale, and    $\E \big( N  (\tau) \big) =0$ holds, leading to 
\begin{equation}
\label{E:comp}
u(x) \,=\, \E \big[ u \big( \widehat{X} (\tau)\big) \big]- \E\int_0^\tau \frac{1}{2} \big(  \psi^2u^{\prime \prime} \big) \big( \widehat{X} (t)\big) \mathrm{d} t \,\le \,\E   \int_0^\tau \Big( c-\psi^2 \big(\widehat X(t)\big) \Big) \ud t
\end{equation}
on account of the inequalities $\, u \le 0\,,$ $\, \big( \psi^2 / 2 \big) u^{\prime \prime} + c - \psi^2 \ge 0\,$ from \eqref{u}. 

\smallskip
We repeat now the above reasoning for the stopping time $\,\tau^*_a\,$ defined in \eqref{E:opt_stop_time}. This satisfies the property $\,\E (\tau^*_a) < \infty\,,$ as is checked by considering the diffusion process $\widehat{X}$ of \eqref{bernoulli} on the interval $[-a, a]$ as its state-space, and recalling Proposition 5.5.32\,(i) in \cite{KS1}. For this stopping time, both inequalities summoned to justify the last comparison in \eqref{E:comp} hold as equalities, and thus so does \eqref{E:comp} itself: 
\begin{equation}
\label{E:comp_too}
u(x)   \,= \,\E   \int_0^{\tau^*_a} \Big( c-\psi^2 \big(\widehat X(t)\big) \Big) \ud t\,.
\end{equation}
Now \eqref{E:comp} and \eqref{E:comp_too} show that the stopping time $\,\tau^*_a\,$ is optimal for the problem of \eqref{bernoulli_value}, among all stopping times with finite expectation. As we argued in the discussion following \eqref{E:OSP}, these are the only relevant  times for the stopping problem under consideration, and we are done: $u (x) = v(x)$ holds for every $x \in {\cal I}_\mu$. 

In particular, there can exist at most one solution to the free-boundary problem.

\subsection{Construction}

For a given constant $a \in (0, \infty)$, the recipe 
\begin{equation}
\label{E:Construct}
u(x) \,:=\, 2 \int_x^a \bigg( \int_y^a \frac{\,\psi^2 (\xi)-c\,}{\psi^2 (\xi)}\, \ud \xi \bigg) \ud y\,, \qquad 0 \le x \le a
\end{equation}
defines a function that satisfies the equation $\,\big(\psi^2(x) /2\big)u''(x)+c-\psi^2(x)=0\,$ in \eqref{u}, as well as the ``smooth-fit" conditions" $\,u(a) =  u^\prime (a-) =  0.$ 

We extend this function by even symmetry to all of $[-a,a]$. For the resulting extension to have the claimed smoothness, we need the condition $\,   u^\prime (0+) =  0\,,$ namely 
\begin{equation}
\label{E:FPP}
\int_0^a \frac{\,\ud \xi 
\,}{\,\psi^2 (\xi)\,}\,  \,=\,\frac{\,a\,}{c}\,.
\end{equation}
Now, the function $ Q(x) := \int_0^x \psi^{-2} (y) \big( c- \psi^{2} (y) \big) \ud y\,, ~ 0<x<\beta\,$ satisfies $\, Q(0)=0$, decreases strictly on $(0, \gamma)$, and increases strictly to infinity on $(  \gamma, \beta)$. It attains its overall minimum at $x=\gamma$, namely, 
$$
Q(\gamma) = \int_0^\gamma \frac{\,c-\psi^2 (\xi) \,}{\psi^2 (\xi)}\, \ud \xi \,<\,0\,.
$$
 Therefore, there exists a unique number $a \in (\gamma, \beta)$  that satisfies $Q( a) =0$, i.e., \eqref{E:FPP}.
 
 With the constant $a$ thus chosen, $\,c-\psi^2(x) > 0\,$ holds for every $\, x\in [a,\beta)$; setting
 \begin{equation}
\label{E:Construct_too}
u(x) \,:=\, 0 \,, \qquad x \in (a,\beta)
\end{equation}
and extending again by even symmetry, we obtain a function $u$ defined via \eqref{E:Construct}, \eqref{E:Construct_too} on all of $\,{\cal I}_\mu = ( - \beta, \beta)$; this function    satisfies all the requirements of the free-boundary problem in \eqref{u}. From what we have proved so far, the function $u$ emerges as the unique solution of this problem, as well as the minimum expected cost  in \eqref{bernoulli_value}; that is, $v \equiv u$.  

\section{Structural properties}
\label{sec5}

In contrast to the two examples just discussed, the typical situation is that the stopping and continuation regions cannot be described so easily. Thus, general methods to determine their structural properties are of considerable interest. 

For this purpose, the following monotonicity result will prove useful. It is based on the observation,  that the term $\Psi^2$ appearing in the integrand in \eqref{E:v} coincides with the instantaneous quadratic 
variation rate of the underlying process $\widehat X$ (we extend the function $\Psi$  to be equal to zero outside 
$[0,\infty)\times \mathcal I_{\mu})$. This suggests a time-change of the martingale $\widehat X$  
in the manner of Dambis-Dubins-Schwarz (e.g., Theorem 3.4.6 and Problem 3.4.7 in \cite{KS1}). We follow the  construction given  by Janson and Tysk in \cite{JT}, where two diffusion processes in natural scale with the same starting point, but with different dynamics, are constructed as time-changes of the {\em same} Brownian motion.

\begin{theorem}\label{monotonicity}
Assume that two distributions $\mu_i$, $i=1,2$ are given and that the corresponding variance functions $\Psi_i$ in \eqref{E:psi} satisfy
$\,\Psi_1(t,x)\geq \Psi_2(t,x)\,$ for all $(t,x)\in[0,\infty)\times\R$. Then the corresponding value functions $v_i$, $i=1,2$ of \eqref{E:v} satisfy
$\,v_1(t,x)\leq v_2(t,x)\,$ for all $(t,x)\in[0,\infty)\times\R$.
\end{theorem}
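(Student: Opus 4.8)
The plan is to realize the two diffusions $\widehat X^{(t,x)}_1$ and $\widehat X^{(t,x)}_2$ as time-changes of one and the same Brownian motion, following Janson and Tysk, and then compare the two cost functionals pathwise along the common clock. Fix a starting point $(t,x)$ and a standard Brownian motion $B$ with $B(0)=x$. For $i=1,2$ define the additive functionals
\[
A_i(r)\,:=\,\int_0^r \Psi_i^2\big(t+A_i(s)\cdot{?},\,B(s)\big)\,\ud s
\]
— more carefully, one sets $A_i$ as the solution of the ODE-type relation determined by $\dot A_i(r)=\Psi_i^2\big(\theta_i(r),B(r)\big)$ with $\theta_i$ the time-coordinate obtained by running the physical clock, exactly as in \cite{JT}. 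The time-changed processes $\widehat X_i(u):=B\big(\alpha_i(u)\big)$, with $\alpha_i$ the inverse of $A_i$, then solve the SDEs in \eqref{E:SDEX} with dispersion $\Psi_i$ and the same driving (innovation) Brownian motion, and crucially they occupy the \emph{same spatial trajectory} $B$, merely traversed at different speeds. The hypothesis $\Psi_1\ge\Psi_2\ge 0$ gives $\dot A_1\ge\dot A_2$, hence $A_1\ge A_2$ and $\alpha_1\le\alpha_2$ pointwise: the first diffusion runs faster.

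Next I would rewrite the value function \eqref{E:v} in the time-changed coordinates. A change of variables $s=\alpha_i(u)$, using $\ud s = \Psi_i^{-2}\ud u$ along the path, turns
\[
\int_0^\tau\Big(c-\Psi_i^2\big(t+s,\widehat X_i^{(t,x)}(t+s)\big)\Big)\ud s
\]
into an integral against the Brownian clock of the form $\int_0^{\sigma}\big(c\,\Psi_i^{-2}-1\big)(\cdots)\,\ud u$, where $\sigma$ ranges over stopping times of $B$. Since the laws of $\widehat X_1$ and $\widehat X_2$ are now expressed on the same probability space through the single process $B$, the two optimization problems become optimal stopping problems for $B$ with different integrands, and the admissible stopping times are the same family (stopping times of the filtration generated by $B$). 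So the comparison $v_1\le v_2$ reduces to a \emph{pointwise} comparison of the two integrands, or more precisely of the two running-cost processes accumulated up to a common $B$-stopping time, after accounting for the time distortion $\alpha_i$.

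The main obstacle — and the step I would spend the most care on — is handling the fact that the integrand $c-\Psi^2$ is \emph{not} sign-definite, so one cannot simply say "a larger $\Psi_i^2$ makes the integrand more negative" termwise: the time-change also stretches the region where the integrand is positive. The clean way around this is to use the identity \eqref{E:equivalence} from Proposition~\ref{reformulation}, which recasts the cost as $\Var(X_i)-\E\!\int_0^\tau\Psi_i^2\,\ud s$ minus the $c\tau$ term is already inside; concretely, $v_i(t,x)=\inf_\tau\E\big[\int_0^\tau(c-\Psi_i^2)\ud s\big]$ and along the common Brownian path $\int_0^\tau \Psi_i^2(t+s,B_{\cdot})\,\ud s = A_i(\sigma)-$ (accumulated) which is \emph{monotone increasing in $i$}, while $\int_0^\tau c\,\ud s = c\,\alpha_i(\sigma)$ which is \emph{monotone decreasing in $i$}. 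Hence for every $B$-stopping time $\sigma$ the random variable $\int_0^{\tau_i}(c-\Psi_i^2)\ud s$ is pointwise $\le$ its $i=2$ counterpart, so taking expectations and then infima over $\sigma$ yields $v_1(t,x)\le v_2(t,x)$. The remaining technical points — that $A_i$, $\alpha_i$ are well-defined and a.s. finite (using $\E\int_0^\infty\Psi_i^2\ud s<\infty$ from \eqref{32b}), that the strong-solution/filtration identifications of Section~\ref{sec2} transfer to the time-changed picture, and that $\Psi_i$ extended by $0$ off $[0,\infty)\times\mathcal I_{\mu_i}$ keeps everything measurable and bounded on compacts — are routine and I would dispatch them by direct reference to \cite{JT} and to Theorem~3.4.6 / Problem~3.4.7 in \cite{KS1}.
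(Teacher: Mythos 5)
Your proposal is correct in substance and follows essentially the same route as the paper: realize both diffusions as time changes of a single Brownian motion in the manner of Janson and Tysk, invoke the comparison of the two time changes (which is exactly \cite[Lemma 10]{JT}, not a pointwise comparison of rates, since $\dot A_1,\dot A_2$ are evaluated at different time coordinates $\theta_1(r)\neq\theta_2(r)$), and then stop both processes at a common Brownian-clock level $\sigma$, so that the two quadratic-variation integrals coincide while the physical times satisfy $\alpha_1(\sigma)\le\alpha_2(\sigma)$, yielding $v_1\le v_2$ -- precisely the paper's device of choosing $\gamma_1$ with $\tau_1(\gamma_1)=\tau_2(\gamma_2)$ and $\gamma_1\le\gamma_2$. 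Only notational slips separate the two write-ups: your $A_i$, viewed as a function of the Brownian clock, should have rate $\Psi_i^{-2}$ (equivalently the paper's $\tau_i$ has rate $\Psi_i^2$ in the diffusion clock); the time-changed SDEs are driven by possibly different Brownian motions $B_i$, the common object being the spatial path $B$; and at a common $\sigma$ the two $\Psi_i^2$-integrals are equal rather than ``monotone in $i$'', which is what makes the sign-indefiniteness of $c-\Psi^2$ harmless without any appeal to Proposition~\ref{reformulation}.
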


\begin{proof} It suffices to show that $v_2 (0,x)\geq v_1 (0,x)$; the case of a general time variable is similar.
For $x\in \R$, let $B$ be a one-dimensional Brownian motion with $B(0)=x$. For each $i=1,2$, let $\tau_i (\cdot)$, $\,i=1,2$, be the unique
stopping time solution of  the integral equation 
\[
\tau_i(t)=\int_{0}^t\Psi_i^2 \big( \theta ,B(\tau_i(\theta))\big)\ud \theta, \quad\quad   0 \le t < \infty \,,\]
with the terminology and construction developed by Janson and Tysk in \cite{JT}.
Then the process $X_i(t):=B(\tau_i(t))$, $\,0 \le t < \infty\,$ satisfies the stochastic integral equation 
\[
  X_i(s)=x+\int_0^s\Psi_i \big(u, X_i(u) \big)\,\vd B_i(u)\,, \qquad 0 \le s < \infty 
\] 
for some Brownian motion $B_i$. Consequently, the distribution  of $\{X_i(s),\,\,  s\geq 0\}$ coincides with the distribution    of $\{\widehat X_i^{(0,x)}(s), \,\, s\geq 0\}$, for $i=1,2$. 

Furthermore, it follows from  \cite[Lemma 10]{JT} that
\begin{equation}
\label{ineq}
\tau_1(t)\geq \tau_2(t)
\end{equation}
holds for all $t\geq 0$. Now, let $\gamma_2$ be a stopping time (of the right-continuous augmentation of the filtration generated by the 
process $X_2$) which minimizes 
\[
\E\left[ c \gamma -\int_{0}^{\gamma}\Psi_2^2 \big(s,X_2(s) \big)\ud s\right]=\E \big[ c \gamma -\tau_2(\gamma)\big]
\]
over all stopping times $\gamma$. Define 
\[
\gamma_1\,:= \,\inf \big\{s\geq 0:\tau_1(s)>\tau_2(\gamma_2) \big\}
\]
so that $\tau_1(\gamma_1)=\tau_2(\gamma_2)$, and note that \eqref{ineq} implies that 
$\gamma_1\leq \gamma_2$. Then $\gamma_1$ is a stopping time for the process $X_1$,
though not necessarily an optimal one for the stopping problem under consideration. Consequently, 
\begin{IEEEeqnarray*}{rCl} 
v_2(0,x) &=& \E\left[ c \gamma_2 -\int_{0}^{\gamma_2}\Psi^2_2\big(s, X_2(s) \big)\ud s\right]
= \E\left[ c \gamma_2 -\tau_2(\gamma_2)\right]\\
&\geq& \E\left[ c \gamma_1 -\tau_1(\gamma_1)\right]
= \E\left[ c \gamma_1 -\int_{0}^{\gamma_1}\Psi_1^2 \big(s,X_1(s)\big)\ud s\right]
\geq v_1(0,x),
\end{IEEEeqnarray*}
which completes the proof.
\end{proof}

As we have seen in the Gaussian and Bernoulli cases, the structure of the stopping region $\mathcal D$ depends crucially on the prior distribution $\mu$; however, we note the following consequence of Theorem~\ref{monotonicity}, which provides a very general structural result with respect to the temporal parameter. 

\begin{corollary}\label{time-increase}{\bf (Contracting continuation region.)}
The function $t \mapsto v( t,x)$ is non-decreasing, for every fixed $x\in \mathcal{I}_\mu$. Consequently, the $t$-section of the stopping region, namely,  
\[
\mathcal D_t \,:= \, \big\{x\in  \mathcal{I}_\mu \,:\,(t,x)\in\mathcal D\big\}\,,
\]
is increasing in time: $\mathcal D_{t_1}\subseteq \mathcal D_{t_2}\,$ for 
$\,0\leq t_1\leq t_2$.
\end{corollary}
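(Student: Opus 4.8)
The plan is to derive Corollary~\ref{time-increase} directly from the monotonicity Theorem~\ref{monotonicity}, by exploiting the time-shift structure already built into the definition~\eqref{E:v} of the value function together with the temporal monotonicity of the dispersion function $\Psi$ recorded in Proposition~\ref{T:psigr}\eqref{T:psigr1}. The key observation is that $v(t,x)$ is, up to a time shift, itself a value function of the form treated in Theorem~\ref{monotonicity}: fixing $t_1 \le t_2$, the problem started at $(t_2,x)$ involves the process with dispersion $(s,y)\mapsto \Psi(t_2+s,y)$, while the problem started at $(t_1,x)$ involves the process with dispersion $(s,y)\mapsto \Psi(t_1+s,y)$. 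Since $\partial\Psi\le 0$ gives $\Psi(t_1+s,y)\ge \Psi(t_2+s,y)$ for every $(s,y)\in[0,\infty)\times\mathcal I_\mu$ (and both sides vanish off the strip, by our convention), the ``larger-dispersion'' problem is the one started at the earlier time $t_1$.

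First I would set up, for the fixed pair $0\le t_1\le t_2$, the two shifted variance functions $\Psi^{(i)}(s,y):=\Psi(t_i+s,y)$, $i=1,2$, and note $\Psi^{(1)}\ge\Psi^{(2)}$ pointwise on $[0,\infty)\times\R$. Next I would observe that the argument of Theorem~\ref{monotonicity} is not in fact specific to variance functions arising from priors: its proof only uses the Janson--Tysk time-change construction for two diffusions in natural scale with the same starting point and with dispersion coefficients $\Psi^{(1)}\ge\Psi^{(2)}$, together with \cite[Lemma 10]{JT}. Hence the inequality $v_2(0,x)\ge v_1(0,x)$ produced there applies verbatim with $\Psi^{(i)}$ in place of $\Psi_i$, yielding precisely
\[
v(t_1,x)\,=\,\inf_{\tau}\E\!\left[\int_0^\tau\!\big(c-\big(\Psi^{(1)}\big)^2(s,\widehat X(s))\big)\ud s\right]\le \inf_{\tau}\E\!\left[\int_0^\tau\!\big(c-\big(\Psi^{(2)}\big)^2(s,\widehat X(s))\big)\ud s\right]=v(t_2,x),
\]
which is the asserted monotonicity $t\mapsto v(t,x)$ non-decreasing. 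The statement about the sections $\mathcal D_{t}$ is then immediate: if $x\in\mathcal D_{t_1}$, i.e.\ $v(t_1,x)=0$, then $0=v(t_1,x)\le v(t_2,x)\le 0$ forces $v(t_2,x)=0$, so $x\in\mathcal D_{t_2}$.

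The only point that needs a little care — and the step I expect to be the main obstacle — is the legitimacy of invoking Theorem~\ref{monotonicity} with shifted variance functions $\Psi^{(i)}$ that need not themselves be of the form $\Psi$ for some prior $\mu_i$. I would handle this either by remarking explicitly that the proof of Theorem~\ref{monotonicity} uses only the inequality between the two dispersion coefficients and the Janson--Tysk machinery (so the hypothesis ``arising from priors'' is decorative), or, more cleanly, by noting that the shifted problem started at time $t_i$ is genuinely an instance of \eqref{E:v} with prior replaced by the posterior-type measure obtained by running the original prior up to time $t_i$ — exactly the reinterpretation already described in the paragraph following Assumption~\ref{ass}, whereby $(t_i,y)$ is an interior point of a problem shifted in time. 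With that identification, $\Psi^{(1)}$ and $\Psi^{(2)}$ are bona fide variance functions of two priors satisfying $\Psi^{(1)}\ge\Psi^{(2)}$ thanks to $\partial\Psi\le0$, and Theorem~\ref{monotonicity} applies directly.
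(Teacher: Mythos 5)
Your proposal is correct and follows essentially the same route as the paper: shift the dispersion function by $t_1$ and $t_2$, invoke $\partial\Psi\le 0$ from Proposition~\ref{T:psigr} to compare the two shifted coefficients, and apply Theorem~\ref{monotonicity} to conclude $v(t_1,x)\le v(t_2,x)$ and hence $\mathcal D_{t_1}\subseteq\mathcal D_{t_2}$. Your extra remark justifying why Theorem~\ref{monotonicity} applies to the shifted coefficients (which the paper uses without comment) is a sensible refinement but does not change the argument.
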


\begin{proof}
Consider two time points $t_i$, $i=1,2$ with $t_1<t_2$, and define $\Psi_i(t,x)=\Psi(t_i+t,x)$ for $(t,x)\in[0,\infty)\times\mathcal I_\mu$.
Since,   on the strength of Proposition~\ref{T:psigr}, the function $\Psi(\cdot,x)$ is decreasing,  we have $\Psi_1(t,x)\geq \Psi_2(t,x)$  for each 
$(t,x)\in [0,\infty)\times\mathcal{I}_\mu$. It then follows from Theorem~\ref{monotonicity} 
that the corresponding value functions
satisfy $v_1(0,x)\leq v_2(0,x)$, which is equivalent to $v(t_1,x)\leq v(t_2,x)$.

Thus $x \in {\cal D}_{t_1}$ (i.e., $v(t_1, x)=0$) leads to $v(t_2, x)=0$, i.e., to $x \in {\cal D}_{t_2}$.
\end{proof}

\begin{corollary}{\bf (Comparison with the Bernoulli distribution.)}
For $\beta>0$, let $a=a(\beta)>0$ be the optimal stopping boundary-point for the Bernoulli distribution with support $\{-\beta,\beta\}$ 
as determined in Section~\ref{Bernoulli}. For a ``prior" distribution $\mu$, recall the notation of (\ref{E:supp}). 
\begin{itemize}
\item[(i)]
Assume that $\mathcal{S}_\mu\subseteq [-\beta,\beta]$. Then $\mathcal C\subseteq [0,\infty)\times(-a,a)$.
\item[(ii)]
Assume that $\mathcal{S}_\mu\subseteq (-\infty,-\beta]\cup[\beta,\infty)$, with $\mathcal{S}_\mu\cap(-\infty,-\beta]\not=\emptyset$
and $\mathcal{S}_\mu\cap[\beta,\infty)\not=\emptyset$. Then $\mathcal C\supseteq [0,\infty)\times(-a,a)$.
\end{itemize}
\end{corollary}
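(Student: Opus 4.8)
The plan is to reduce both parts to a variance comparison between the posterior of $\mu$ and the posterior of a Bernoulli distribution, and then invoke Theorem~\ref{monotonicity} together with the explicit description of the Bernoulli stopping region obtained in Section~\ref{Bernoulli}. The key elementary fact I would isolate first is the following: if a probability measure $\nu$ on $\R$ has $\supp(\nu)\subseteq[-\beta,\beta]$, then its variance is dominated by that of the symmetric two-point measure on $\{-\beta,\beta\}$ having the same mean; conversely, if $\supp(\nu)\subseteq(-\infty,-\beta]\cup[\beta,\infty)$ and $\nu$ charges both half-lines, then its variance dominates that of the Bernoulli law on $\{-\beta,\beta\}$ with the same mean. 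Both statements follow from the one-dimensional inequality $u^2\le\beta^2$ (resp. $u^2\ge\beta^2$) on the relevant support, combined with $\Var(\nu)=\E_\nu[u^2]-(\E_\nu[u])^2$ and the fact that the Bernoulli variance with mean $x$ is exactly $\beta^2-x^2=\psi^2(x)$.

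Next I would apply this fact pointwise in $(t,y)$, not to $\mu$ itself but to the posterior measures $\mu_{t,y}$ of \eqref{E:mu}. The crucial observation is that the exponential tilting in \eqref{E:mu} does not enlarge the support: $\supp(\mu_{t,y})\subseteq\supp(\mu)=\mathcal S_\mu$ for every $(t,y)$. Hence in case (i), every posterior $\mu_{t,y}$ is supported in $[-\beta,\beta]$, so by the elementary fact $H(t,y)=\Var(\mu_{t,y})\le\beta^2-G^2(t,y)$, which in the notation of \eqref{E:psi} and the Bernoulli computation of Section~\ref{Bernoulli} says exactly $\Psi_\mu(t,x)\le\psi^2(x)=\Psi_{\mathrm{Ber}}(t,x)$ for all $(t,x)$, after the change of variable $x=G_t(y)$ that identifies $H(t,y)$ with $\Psi(t,G_t(y))$. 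In case (ii) the support condition is preserved as well, and since $\mu_{t,y}$ still charges both half-lines (tilting by a strictly positive density cannot kill mass), the reverse inequality $\Psi_\mu(t,x)\ge\psi^2(x)$ holds. One subtlety to address: the Bernoulli $\Psi$ is only defined on $(-\beta,\beta)$, whereas $\mathcal I_\mu$ in case (ii) may be much larger; but following the convention stated before Theorem~\ref{monotonicity} we extend $\Psi$ by zero outside its natural strip, and since in case (ii) we only compare on the common domain $(-\beta,\beta)$ this causes no problem — indeed outside $(-\beta,\beta)$ the inequality $\Psi_\mu\ge 0=\Psi_{\mathrm{Ber}}$ is automatic.

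Now Theorem~\ref{monotonicity} applies directly. In case (i), from $\Psi_{\mathrm{Ber}}\ge\Psi_\mu$ we get $v_{\mathrm{Ber}}(t,x)\le v_\mu(t,x)$, so the Bernoulli stopping region is contained in that of $\mu$, equivalently $\mathcal C_\mu\subseteq\mathcal C_{\mathrm{Ber}}$. By the verification and construction of Section~\ref{Bernoulli}, the Bernoulli continuation region is precisely $[0,\infty)\times(-a,a)$ (it is time-homogeneous because $\psi$ depends on $x$ only), which gives $\mathcal C\subseteq[0,\infty)\times(-a,a)$. In case (ii) the inequality $\Psi_\mu\ge\Psi_{\mathrm{Ber}}$ gives $v_\mu\le v_{\mathrm{Ber}}$ on the common domain, hence $\mathcal C_\mu\supseteq\mathcal C_{\mathrm{Ber}}=[0,\infty)\times(-a,a)$, which is the claim; here one should note explicitly that if $\beta^4\le c$ then $a$ is not defined and the statement in (ii) is vacuous, so we may assume $\beta^4>c$ as in Section~\ref{Bernoulli}.

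\textbf{Main obstacle.} The genuinely delicate point is the bookkeeping around the change of variables: $\Psi$ lives in the $(t,x)$-coordinate (value of $\widehat X$) while $H$ lives in the $(t,y)$-coordinate (value of $Y$), and Theorem~\ref{monotonicity} requires the inequality $\Psi_1(t,x)\ge\Psi_2(t,x)$ for the \emph{same} $x$. Since $G_t$ depends on the prior, the two models assign the same $\widehat X$-value $x$ to \emph{different} $Y$-values $y$, so one cannot simply compare $H_\mu(t,y)$ with $H_{\mathrm{Ber}}(t,y)$ at a common $y$. The resolution is that the elementary variance inequality is stated intrinsically in terms of the posterior measure and its mean, so it yields $\Psi_\mu(t,x)=\Var(\mu_{t,G_t^{-1}(x)})$ versus $\beta^2-x^2$ with the \emph{same} $x$ automatically — the mean of the relevant posterior is exactly $x$ in both cases. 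Making this identification clean, and handling the domain mismatch in case (ii) via the zero-extension convention, is where the care is needed; everything else is routine.
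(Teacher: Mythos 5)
Your proposal is correct and follows essentially the paper's own route: the extremal-variance property of the two-point law on $\{-\beta,\beta\}$ among measures with prescribed mean and the stated support constraint, combined with Theorem~\ref{monotonicity} and the explicit Bernoulli solution of Section~\ref{Bernoulli}; the only (minor) difference is that you obtain $\Psi_\mu(t,x)\le \beta^2-x^2$ (resp.\ $\ge$) pointwise in $t$ via support-preservation of the posterior $\mu_{t,y}$, whereas the paper makes the comparison at $t=0$ and then invokes the time-monotonicity of $\Psi$ from Proposition~\ref{T:psigr} for part (i). One notational slip: the Bernoulli dispersion is $\Psi_{\mathrm{Ber}}(t,x)=\psi(x)=\beta^2-x^2$, not $\psi^2(x)$, but this does not affect the argument since for nonnegative functions $\Psi_1\ge\Psi_2$ is equivalent to $\Psi_1^2\ge\Psi_2^2$.
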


\begin{proof}
It is straightforward to check that among all distributions with support contained in $[-\beta,\beta]$ and expected value $x\in[-\beta,\beta]$, the Bernoulli distribution
\begin{equation}
\label{bern_sp}
\frac{\beta-x}{2\beta} \, \delta_{-\beta}+\frac{ \beta + x}{2\beta} \, \delta_{\beta}
\end{equation}
   is the one with the largest variance.
Consequently, if $\,\mathcal{S}_\mu\subseteq [-\beta,\beta]$, then $\,\Psi(0,x)\leq \beta^2-x^2=\psi(x)$. Thus, by Proposition~\ref{T:psigr} we have $\Psi(t,x)\leq \Psi(0,x)\leq \psi(x)$,
and (i) follows from Theorem~\ref{monotonicity}.

Similarly, among all distributions $\mu$ with $\mathcal S_\mu\cap(-\beta,\beta)=\emptyset$ and with expected value $x\in(-\beta,\beta)$, the one with the smallest variance is the Bernoulli distribution in \eqref{bern_sp}. Consequently, $\Psi(t,x)\geq  \beta^2-x^2$ for all $x\in\mathcal I_\mu$, and 
(ii) follows as above, on account of Theorem~\ref{monotonicity}. 
\end{proof}

We  restrict now attention to sub-classes of prior distributions, for which further structural properties can be derived.
We first recall the following well-known result from optimal stopping theory (see for instance \cite[Remark, page 217]{bO07}).

\begin{lemma}\label{T:locallygood}
Assume that $\Psi^2(t,x)>c$ at some point $(t,x)\in[0,\infty)\times \mathcal{I}_\mu$. Then $(t,x)\in\mathcal C$.
\end{lemma}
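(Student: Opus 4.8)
The plan is to show that if $\Psi^2(t,x) > c$ at some point $(t,x)$, then by starting the diffusion there and stopping after a short deterministic time, we achieve a strictly negative expected cost, hence $v(t,x) < 0$, i.e. $(t,x) \in \mathcal{C}$. The key is that $\Psi$ is continuous on the open strip $(0,\infty) \times \mathcal{I}_\mu$ (it has derivatives of all orders there, inherited from $G$ and $H$), and by Assumption~\ref{ass} this continuity extends down to $t=0$, so the inequality $\Psi^2 > c$ persists on a neighborhood of $(t,x)$.

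First I would fix a point $(t_0, x_0)$ with $\Psi^2(t_0, x_0) = c + \eps$ for some $\eps > 0$. By continuity of $\Psi$, choose $\delta > 0$ and a spatial radius $r > 0$ so that $\Psi^2(s, y) \geq c + \eps/2$ whenever $|s - t_0| \le \delta$ (with $s \ge 0$) and $|y - x_0| \le r$. Next, starting the diffusion $\widehat X^{(t_0,x_0)}$ at $x_0$, let $\sigma$ be the first exit time of $\widehat X^{(t_0,x_0)}(t_0 + \cdot)$ from the spatial ball $(x_0 - r, x_0 + r)$, and consider the stopping time $\tau := \sigma \wedge \delta$. For $s \in [0, \tau]$ we have both $t_0 + s \le t_0 + \delta$ and $|\widehat X^{(t_0,x_0)}(t_0+s) - x_0| \le r$, so the integrand $c - \Psi^2(t_0 + s, \widehat X^{(t_0,x_0)}(t_0+s)) \le -\eps/2$ pointwise on $[0,\tau]$. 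Therefore
\[
v(t_0, x_0) \le \E\left[\int_0^\tau \big(c - \Psi^2(t_0+s, \widehat X^{(t_0,x_0)}(t_0+s))\big)\,\ud s\right] \le -\frac{\eps}{2}\,\E[\tau] < 0,
\]
where the final strict inequality holds because $\tau$ is a positive stopping time: $\sigma > 0$ a.s. (the diffusion starts in the interior of the ball and has continuous paths), hence $\tau = \sigma \wedge \delta > 0$ a.s., giving $\E[\tau] > 0$.

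The only mild technical point is ensuring $\E[\tau] > 0$, which is immediate from $\tau \ge \sigma \wedge \delta$ with $\sigma$ a.s. positive, so there is really no serious obstacle here; the result is essentially a restatement of the fact that the continuation region always contains the open set $\{\Psi^2 > c\}$. One should note that the argument uses nothing about optimality of any particular stopping rule — it is purely an upper bound on the value function obtained by exhibiting one admissible stopping time with negative cost. If one prefers to avoid invoking continuity of $\Psi$ at points with $t_0 = 0$, one can alternatively phrase everything on the strip $(0,\infty)\times\mathcal{I}_\mu$ and then appeal to the remark following Assumption~\ref{ass}, which legitimizes treating $(0,y)$ as an interior point of a shifted problem.
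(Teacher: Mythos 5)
Your proposal is correct and follows essentially the same route as the paper: continuity of $\Psi^2$ yields a space--time neighbourhood on which $\Psi^2 - c$ is bounded below by a positive constant, and the exit time of that neighbourhood (your $\sigma \wedge \delta$, the paper's exit time $\tau_{\mathcal R}$ of a rectangle $\mathcal R$) serves as an admissible stopping time giving $v(t,x) \le -\eps\, \E[\tau] < 0$. Your explicit remark that $\E[\tau]>0$ because the exit time is a.s.\ positive, and the note on handling $t=0$ via the discussion following Assumption~\ref{ass}, are fine and consistent with the paper's argument.
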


\begin{proof}
By the continuity of the function $\Psi^2$, there exists a real number $\eps>0$ and a rectangle $\mathcal R=[t_1,t_2)\times(a,b)\subseteq [0,\infty)\times \mathcal{I}_\mu$ with
$(t,x)\in\mathcal R\,,$ and $\,\Psi^2-c>\eps\,$ on $\mathcal R$. Denoting by 
\[\tau_{\mathcal R}:=\inf \big\{s\geq 0: \big(t+s,\widehat X(t+s)\big)\notin\mathcal R \big\},
\]
we have 
\[v(t,x)\leq \E\left[ \int_{0}^{\tau_\mathcal R}\left(c- \Psi^2(t+s, \widehat X(t+s) \right)\ud s\right]\leq -\eps \, \E [\tau_{\mathcal R}]<0,\]
which shows that $(t,x)\in\C$.
\end{proof}

 Our next task is to provide conditions, under which the stopping region is one-sided. We shall use the notation  $\overline{ \mathcal I}_{\mu}=\mathcal I_{\mu}\cup\{a,b\}\,,$ where $a=\inf(\mathcal S_\mu)$ and $b=\sup(\mathcal S_\mu)$ are the (possibly infinite) boundary points of $\,\mathcal I_\mu \,$, as in   \eqref{E:supp}.

\begin{proposition}\label{one-sided}{\bf (One-sided stopping region.)}
Assume that, for every fixed time $t\geq 0$, the function $x\mapsto \Psi(t,x)$ $($equivalently, the function $y\mapsto H(t,y))$  is non-decreasing.
Then the following statements hold:
\begin{itemize}
\item[(i)]
There exists a non-decreasing function $b:[0,\infty)\to \overline{\mathcal I}_\mu$ such that the optimal continuation region is of the form $$\,\mathcal C\,=\, \big\{(t,x)\in [0,\infty)\times \mathcal{I}_\mu:x>b(t) \big\}\,.$$
\item[(ii)]
With $\Psi(t,\infty):=\lim_{x\to\infty}\Psi(t,x)$,   let $$T\,:=\,\inf \big\{t\geq 0:\Psi^2(t,\infty)\leq c \big\}\,.$$ Then $\,b(t) \in \mathcal I_\mu \,$ for all $t<T,$ and $\,b(t)=\sup(\mathcal S_\mu )$ for $t\geq T$, as in \eqref{E:supp}. 
\item[(iii)]
If $x\mapsto \Psi(t,x)$ is strictly increasing for all $t\geq 0$, then the function $b:[0,\infty)\to \overline{\mathcal I}_\mu$ is continuous.  
\end{itemize}
\end{proposition}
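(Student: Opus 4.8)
The plan is to exploit the monotonicity of $x\mapsto\Psi(t,x)$ together with the contracting-continuation-region property (Corollary~\ref{time-increase}) and Lemma~\ref{T:locallygood}. For part (i), I would fix $t\geq 0$ and show that the $t$-section $\mathcal{C}_t:=\{x\in\mathcal I_\mu:(t,x)\in\mathcal C\}$ is a ``right-half'' interval, i.e.\ of the form $(b(t),\sup\mathcal I_\mu)$ for some $b(t)\in\overline{\mathcal I}_\mu$. The key is a coupling/translation argument: if $x_0\in\mathcal C_t$ is a continuation point and $x_1>x_0$, then running the diffusion $\widehat X^{(t,x_1)}$ and $\widehat X^{(t,x_0)}$ on the same Brownian motion, the order $x_1>x_0$ is preserved (one-dimensional SDEs with the dispersion $\Psi$ which, by smoothness on the open strip, generate a monotone flow); since $\Psi(t,\cdot)$ is non-decreasing, at every time the larger process sits where the instantaneous reward $c-\Psi^2$ is no larger than at the corresponding point of the smaller process. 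Using the optimal stopping time for the $x_0$-problem as an admissible (sub-optimal) time for the $x_1$-problem then yields $v(t,x_1)\le v(t,x_0)<0$, so $x_1\in\mathcal C_t$. This gives $\mathcal C_t=(b(t),\sup\mathcal I_\mu)$ with $b(t):=\inf\mathcal C_t$, and the monotonicity of $t\mapsto b(t)$ is immediate from Corollary~\ref{time-increase} (as $t$ increases the section $\mathcal D_t$ grows, so $\mathcal C_t$ shrinks, so $b(t)$ increases).

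For part (ii), first recall from Remark~\ref{stop_now} that immediate stopping is optimal on any strip where $\sup_x\Psi^2(t,x)\le c$; since $\Psi(t,\cdot)$ is non-decreasing, $\sup_x\Psi^2(t,x)=\Psi^2(t,\infty)$ (interpreting the limit as $x\uparrow\sup\mathcal I_\mu$). Hence for $t\ge T$ the whole $t$-section lies in $\mathcal D$, forcing $b(t)=\sup(\mathcal S_\mu)$. For $t<T$ we have $\Psi^2(t,\infty)>c$; I would then show that for $x$ close enough to $\sup\mathcal I_\mu$ one has $\Psi^2(t,x)>c$, whence Lemma~\ref{T:locallygood} puts $(t,x)\in\mathcal C$ and so $b(t)<\sup(\mathcal S_\mu)$; the lower bound $b(t)\in\mathcal I_\mu$ (i.e.\ $b(t)>\inf\mathcal S_\mu$, or at least $b(t)\in\overline{\mathcal I}_\mu$ meeting $\mathcal I_\mu$) follows because near the left endpoint $\Psi$ is small, so $c-\Psi^2>0$ there and those points are in $\mathcal D$ by the same reasoning as in Remark~\ref{stop_now} localized to a strip—hence $b(t)$ is a genuine interior point of $\mathcal I_\mu$.

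For part (iii), continuity of $b$ splits into upper and lower semicontinuity. Right-continuity (equivalently, $\limsup_{s\downarrow t}b(s)\le b(t)$, but $b$ is non-decreasing so the relevant direction is lower-semicontinuity from the left): since $\mathcal C$ is open (as $v$ is upper-semicontinuous and $\mathcal C=\{v<0\}$), for each $(t,x)\in\mathcal C$ a whole neighborhood lies in $\mathcal C$, which gives $b$ upper semicontinuous, i.e.\ $\limsup_{s\to t}b(s)\le b(t)$. The reverse inequality $\liminf_{s\to t}b(s)\ge b(t)$ is the delicate point: one must rule out a downward jump of $b$ at $t$. Here I would use the \emph{strict} monotonicity of $\Psi(t,\cdot)$ together with the smooth-fit / local analysis near the boundary: if $b$ jumped down as $s\uparrow t$, there would be a point $(t,x)$ with $b(t)>x>\lim_{s\uparrow t}b(s)$ lying in $\mathcal D$ but with a left-neighborhood of times whose sections are in $\mathcal C$ at level $x$; since $\Psi^2(t,x)<c$ strictly (because $x<b(t)$ and, by strict monotonicity plus part (ii)'s identification, $\Psi^2(t,b(t))\le c$ hence $\Psi^2(t,x)<c$), one derives from the optimal stopping characterization that $v(t,x)<0$ by running the diffusion for a short time—contradicting $(t,x)\in\mathcal D$. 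The main obstacle is precisely this last step: converting ``$\Psi^2<c$ strictly at a boundary point of $\mathcal D$'' into ``that point is actually in $\mathcal C$,'' which needs a careful short-time estimate on $\E\int_0^\tau(c-\Psi^2)\,ds$ using that the process spends positive expected time in the region $\{\Psi^2<c\}$ before possibly escaping upward past $b$; strict monotonicity is what guarantees a definite sign and prevents the degenerate balance seen in time-homogeneous pathologies.
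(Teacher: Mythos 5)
Your part (i) follows essentially the paper's route: a pathwise comparison of $\widehat X^{(t,x_1)}$ and $\widehat X^{(t,x_2)}$ driven by the same Brownian motion (the paper cites the comparison theorem for one-dimensional SDEs, \cite[Theorem IX.3.7]{RY}, rather than a monotone-flow argument, but it is the same idea), combined with the spatial monotonicity of $\Psi$, gives $v(t,\cdot)$ non-increasing, and the monotonicity of $b$ comes from Corollary~\ref{time-increase}. The $t\ge T$ half of (ii) also matches (Proposition~\ref{T:psigr} plus Remark~\ref{stop_now}), and for $t<T$ the paper, like you, invokes Lemma~\ref{T:locallygood} to produce continuation points, i.e.\ $b(t)<\sup(\mathcal S_\mu)$. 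However, your extra argument for the other side of (ii) --- that near the left endpoint $c-\Psi^2>0$ and ``those points are in $\mathcal D$ by the same reasoning as in Remark~\ref{stop_now} localized to a strip'' --- is not valid: Remark~\ref{stop_now} cannot be localized in the spatial variable, since a point with $\Psi^2(t,x)<c$ may well be a continuation point if the diffusion can reach the region $\{\Psi^2>c\}$ cheaply; positivity of the integrand in a neighbourhood never by itself implies $v=0$ there.

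The genuine gap is the left-continuity step of (iii). Your plan is to take $(t,x)\in\mathcal D$ with $\lim_{s\uparrow t}b(s)<x<b(t)$, observe $\Psi^2(t,x)<c$, and ``derive that $v(t,x)<0$ by running the diffusion for a short time,'' contradicting $(t,x)\in\mathcal D$. The inequality goes the wrong way: where $\Psi^2<c$ the running cost $c-\Psi^2$ is strictly positive, so continuing for a short time makes the expected cost larger, not negative; no short-time estimate on $\E\int_0^\tau\big(c-\Psi^2\big)\,\ud s$ of the kind you describe can yield a negative value, and points with $\Psi^2<c$ can perfectly well lie in $\mathcal D$ (Lemma~\ref{T:locallygood} only gives the converse implication). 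The paper's contradiction runs in the opposite direction: assuming $b(t_1-)<b(t_1)$, it uses Lemma~\ref{T:locallygood} at $(t_1,b(t_1))\in\mathcal D$ together with the \emph{strict} spatial monotonicity (and $\partial\Psi\le 0$) to find a rectangle $\mathcal R=(t_0,t_1)\times(x_1,x_2)\subseteq\mathcal C$ with $\Psi^2\le c-\eps$ on $\mathcal R$ and with its top edge at time $t_1$ lying in $\mathcal D$; then, by the martingale property of the value process inside $\mathcal C$ up to the exit time $\tau_{\mathcal R}$, one gets $v(t,x)\ge \eps\,(t_1-t)\,\P\big(\tau_{\mathcal R}\ge t_1-t\big)+v(t_0,x_1)\,\P\big(\tau_{\mathcal R}<t_1-t\big)$, and since the lateral-exit probability is $o(t_1-t)$ while the first term is of order $t_1-t$, this forces $v(t,x)>0$ for $t$ close to $t_1$, contradicting $v\le 0$. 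In other words, the contradiction is that continuation points in the gap would have \emph{positive} value, not that the stopping point above the gap has negative value; this is the missing idea in your sketch (which you yourself flag as the ``main obstacle''). Your right-continuity argument (openness of $\mathcal C$ from upper semicontinuity of $v$, plus monotonicity of $b$) is fine and agrees with the paper.
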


\begin{proof}
(i). Without loss of generality, we consider the initial time $t=0$. We consider two points $(0,x_1)$ and $(0,x_2)$ with $x_1,x_2\in\mathcal I_\mu$ and $x_1<x_2$. 
By comparison results for solutions of stochastic integral equations (see for instance  \cite[Theorem IX.3.7]{RY}), we obtain 
$\widehat X^{0,x_1}(s)\leq \widehat X^{0,x_2}(s)$ for all times $s\geq 0$.
Therefore, 
\begin{IEEEeqnarray*}{rCl}
\E\left[\int_0^{\tau} \Big( c-\Psi^2(s,\widehat X^{0,x_1}(s)) \Big) \ud s\right]  \,  \geq  \,  \E\left[\int_0^{\tau} \Big( c-\Psi^2(s,\widehat X^{0,x_2}(s)) \Big) \ud s\right]
\end{IEEEeqnarray*}
holds for any stopping time $\tau$. Taking the infimum over all stopping times $\tau$ yields 
$v(0,x_1)\geq v(0,x_2)$. In particular, if $v(0,x_1)<0$, then also $v(0,x_2)<0$, which shows that $\C$ has the claimed form.
The monotonicity of $b$ is immediate from Corollary~\ref{time-increase}.

(ii).
With $t\geq T$, we have $\Psi^2(t+s,\cdot)\leq c$  for all $s\geq 0$ by Proposition~\ref{T:psigr}(\ref{T:psigr1}), and the claim follows from Remark \ref{stop_now}. For $t<T$, on the other hand, there are points $x\in \mathcal{I}_\mu$ with $\Psi^2(t,x)>c$, so the respective claim follows from Lemma~\ref{T:locallygood}.

(iii). The upper semi-continuity of
$v$ and the monotonicity of $b$ imply $b(t)=b(t+)$ for all $t\geq 0$.

Next assume that $x\mapsto \Psi(t,x)$ is strictly increasing, and that $b(t_1-)<b(t_1)$ for some $t_1>0$. Since $(t_1,b(t_1))\in\D$, it follows from Lemma~\ref{T:locallygood} that $\Psi^2(t_1,b(t_1))\leq c$. Consequently, there exists an 
$\ep>0$ and a rectangle 
$\mathcal R=(t_0,t_1)\times(x_1,x_2)$ with $t_0<t_1$ and $b(t_1-)\leq x_1<x_2\leq b(t_1)$ such that
$\mathcal R\subseteq \C$ and $\Psi^2\leq c-\ep$ on $\mathcal R$. Moreover, 
$v(t_0,x_1)\leq v<0$ on $\mathcal R$. For a starting point 
$(t,x)\in\mathcal R$, define
\[\tau^{t,x}_{\mathcal R} \,:= \,\inf \big\{s\geq 0:(t+s,\widehat X^{t,x}(t+s))\notin\mathcal R \big\}\]
to be the first exit time from $\mathcal R$.
Since $\mathcal R\subseteq \C$, the process 
\[
v\left(t+\big( s\wedge\tau^{t,x}_{\mathcal R} \big),\widehat X^{t,x} \big(t+ \big( s\wedge\tau^{t,x}_{\mathcal R} \big) \big)\right)
+\int_0^{\,s\wedge\tau^{t,x}_{\mathcal R}} \left(c-\Psi^2 \big(t+ \theta,\widehat X^{t,x}(t+\theta) \big)\right)\ud \theta  \,, \quad s \ge 0 
\]
is a martingale by optimal stopping theory, and
\begin{eqnarray*}
	v(t,x) &=& \E\left[ v\left(t+\tau^{t,x}_{\mathcal R},\widehat X^{t,x}(t+\tau^{t,x}_{\mathcal R})\right)+
	\int_0^{\tau_{\mathcal R}^{t,x}}\left(c-\Psi^2 \big(t+s,\widehat X^{t,x}(t+s) \big)\right)\ud s\right]\\
	&\geq& \E\left[\mathbf{ 1}_{\{\tau^{t,x}_{\mathcal R}\geq t_1-t\}}\int_0^{t_1-t} \left(c-\Psi^2 \big(t+s,\widehat X^{t,x}(t+s) \big) \right)\ud s \right]+ v(t_0,x_1)\, \mathbb P \big(\tau^{t,x}_{\mathcal R}<t_1-t \big)\\
	&\geq& \ep (t_1-t)\,\mathbb P \big(\tau^{t,x}_{\mathcal R}\geq t_1-t \big)+ v(t_0,x_1) \, \mathbb P \big(\tau^{t,x}_{\mathcal R}<t_1-t \big).
\end{eqnarray*}
Here the first term is of size $\ep(t_1-t)$ for $t$ close to $t_1$, whereas the probability $\mathbb P(\tau^{t,x}_{\mathcal R}<t_1-t)$ is of order $o(t_1-t)$ as $t\to t_1$. 
Consequently, for each $x\in(x_1,x_2)$ there exists $t$ close to $t_1$ such that
$v(t,x)> 0$, which is a contradiction. This proves that $b(t_1-)=b(t_1)$, so $b$ is continuous.
\end{proof}

\begin{remark}
{\rm
There is an analogue of Proposition~\ref{one-sided} for problems in which the function $x\mapsto \Psi(t,x)$ is non-increasing for every fixed $t \ge 0$. 
Arguing exactly as above, this condition implies the existence of a non-increasing boundary $b$ such that 
$$\mathcal C \,= \,\big\{(t,x):x<b(t) \big\} \,.$$
}
\end{remark}

\subsection{A case with a one-sided stopping region: the absolute value of a normal distribution}
Let us consider a case where the prior belief is represented by the absolute value of a normally distributed random variable 
with mean $0$ and variance $\sigma^2$, i.e.,
\[
\mu(\vd u)=\sqrt{\frac{2}{\pi\sigma^2}} \,\, \exp\left\{-\frac{u^2}{2\sigma^2}\right\}\vd u, \quad\quad u\geq 0.
\]
Then $\mathcal{I}=(0,\infty),$  and determined computation gives
\[
H(t,y)=\frac{\sigma^2}{1+\sigma^2t}\left(1- z\, \frac{\varphi(z)}{ \Phi (z)}-\frac{\varphi^2(z)}{\Phi^2(z)}\right) \bigg|_{z= Z(t,y)}, \quad \text{for} \quad Z(t,y) := \frac{\sigma y }{\,\sqrt{1+\sigma^2t}\,}
\]
and
\[
\varphi(b)=\frac{1}{\sqrt{2\pi}}\exp\{-b^2/2\}, \qquad \Phi(a)=\int_{-\infty}^a\varphi(b)\,\vd b
\]
for the function of \eqref{E:H(t,y)}. Note that this function satisfies $$\lim_{y\to\infty}H(t,y)\,=\,\frac{\sigma^2}{\,1+\sigma^2\,t\,}$$ for $t\geq 0$, very much in accordance with Section 3.2.   
Furthermore, $\Psi(t,\cdot)$ is increasing if and only if $H(t,\cdot)$ is increasing, and 
\begin{eqnarray*}
D H(t,y) &=&  \frac{\sigma^3\varphi(z)}{ \, (1+\sigma^2t)^{3/2}\, \Phi(z)\,}\, \Big( z^2-1 + 3z \frac{\varphi(z)}{\Phi(z)}+2\frac{\varphi^2(z)}{\Phi^2(z)}\Big)\bigg|_{z= Z(t,y)}.
\end{eqnarray*}
To see that $DH\geq 0$, we follow an argument from \cite{S}. It suffices to check that 
\[
f(z) := z^2 + 3z \frac{\varphi(z)}{\Phi(z)}+2\frac{\varphi^2(z)}{\Phi^2(z)} = \left(z+2\frac{\varphi(z)}{\Phi(z)}\right)\left(z+ \frac{\varphi(z)}{\Phi(z)}\right) \geq 1.
\]
Straightforward calculations give
\begin{eqnarray}\label{fprime}
 f'(z) &=& 2\left(\frac{\varphi(z)}{\Phi(z)} +z\right)\left(1- z \frac{\varphi(z)}{ \Phi (z)}-\frac{\varphi^2(z)}{\Phi^2(z)}\right)+\frac{\varphi(z)}{\Phi(z)}(1-f(z))\\
 \notag 
 &>& \frac{\varphi(z)}{\Phi(z)}(1-f(z))
\end{eqnarray}
at all points $z$. 
However, it is clear that $\lim_{z\to\infty} f(z)=\infty$, and using the expansion 
\[
\Phi(z) =\frac{\varphi(z)}{-z} \bigg(1-\frac{1}{z^2}+o \big(1/z^2 \big) \bigg)
\]
for $z<0$ yields $\lim_{z\to -\infty} f(z)=1$. Therefore, 
if there is a finite root of the equation $f(z)=1$, then there exists a finite $z_0$ with $f(z_0)\leq 1$ and $f'(z_0)=0$, which contradicts \eqref{fprime}.
Therefore, $f\geq 1$ so $DH\geq 0$.

It now follows from Proposition~\ref{one-sided} that the continuation region is one-sided and given by
\[
\mathcal C \,=\, \big\{(t,x)\in[0,T)\times (0,\infty):x>b(t) \big\}
\]
for some continuous, non-decreasing function $b:[0,T)\to[0,\infty)$ with $b(T):=\lim_{t\uparrow T}b(t)=\infty$, where
$T=\left(\frac{1}{\sqrt{ c}}-\frac{1}{\sigma^2}\right)^+$.

\section{Symmetric prior distributions}
\label{sec6}

In this section we consider the special case when $\mu$ is symmetric around the origin with $\mathcal{I}_\mu=(-a,a)$ in \eqref{E:supp},  for some $a\in(0,\infty]$. 
Then the functions  $\Psi$ and $v$ are also symmetric around
the origin,  in the sense that $\Psi(t,x)=\Psi(t,-x)$ and $v(t,x)=v(t,-x)$. Consequently, the optimal stopping problem can be re-written in terms
of the reflected diffusion $Z=Z^{t,x}=\vert \widehat X^{t,x}\vert$ as
\begin{IEEEeqnarray}{rCl} 
\label{E:vZ}
v(t,x)=\inf_{\tau\in\mathcal T
}
\E\left[\int_0^\tau  \Big( c-\Psi^2 \big(t+s,Z^{t,x}(t+s)\big) \Big)\ud s\right], \quad (t,x)\in[0,\infty)\times [0,a).~~~~~~
\end{IEEEeqnarray}

\begin{proposition}\label{symmetry}
Assume that $\mu$ is symmetric around the origin.
\begin{itemize}
\item[(i)]
Assume that, for every fixed time $t\geq 0$,  the function $\Psi(t,\cdot):[0,a)\to[0,\infty)$ is non-decreasing. 
Then there exist
a point $t_0\geq 0$ and a non-decreasing boundary $b:[t_0,\infty)\to [0,a]$ such that 
\[\mathcal C=([0,t_0)\times \mathcal I_\mu)\cup \{(t,x)\in[t_0,\infty)\times \mathcal I_\mu:\vert x\vert >b(t)\}.\]
\item[(ii)]
Assume that,  for every fixed  time $t\geq 0$, the function $\Psi(t,\cdot):[0,a)\to[0,\infty)$ is non-increasing. Then there exists
a non-increasing boundary $\,b:[0,\infty)\to[0,a]\,$ such that $$\mathcal C\,=\, \big\{(t,x)\in[0,\infty)\times \mathcal I_\mu:\vert x\vert<b(t) \big\} .$$
\end{itemize}
\end{proposition}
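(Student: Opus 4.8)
The plan is to derive the shape of the continuation region from a monotonicity of $v(t,\cdot)$ in $|x|$, together with the temporal monotonicity recorded in Corollary~\ref{time-increase}. Throughout, we use the reflected formulation \eqref{E:vZ}, writing $Z^{t,x}:=|\widehat X^{t,x}|$ for $x\in[0,a)$; by Tanaka's formula together with a Dambis--Dubins--Schwarz time change, and using the symmetry $\Psi(t,\cdot)=\Psi(t,-\cdot)$, the process $Z^{t,x}$ is a one-dimensional diffusion reflected at the origin, with state space $[0,a)$, dispersion $\Psi(t+\cdot\,,\cdot)$ and $Z^{t,x}(t)=x$. For $0\le x_1\le x_2<a$ one realizes the two processes $Z^{t,x_1}$ and $Z^{t,x_2}$ of \eqref{E:vZ} on a single probability space, driven by the same Brownian motion, so that $Z^{t,x_1}(t+s)\le Z^{t,x_2}(t+s)$ for all $s\ge0$, almost surely; such a monotone coupling of reflected one-dimensional diffusions with ordered initial points is classical, and follows, for instance, from a comparison theorem for reflected stochastic differential equations in the spirit of \cite[Theorem IX.3.7]{RY}, or---in the manner of the proof of Theorem~\ref{monotonicity}---by time-changing two synchronously coupled reflecting Brownian motions. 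Since $\Psi\ge0$, the map $z\mapsto\Psi^2(t+s,z)$ has the same monotonicity on $[0,a)$ as $\Psi(t+s,\cdot)$, so under the hypothesis of part~(i) we get $c-\Psi^2\big(t+s,Z^{t,x_1}(t+s)\big)\ge c-\Psi^2\big(t+s,Z^{t,x_2}(t+s)\big)$ for all $s\ge0$, almost surely, while under the hypothesis of part~(ii) the reverse inequality holds. Integrating against $\ud s$ up to an arbitrary stopping time $\tau$ (the same $\tau$ serving both processes, since they are driven by the common Brownian motion), taking expectations and passing to the infimum over $\tau$, we conclude that $x\mapsto v(t,x)$ is non-increasing on $[0,a)$ in case~(i), and non-decreasing on $[0,a)$ in case~(ii), for every fixed $t\ge0$.

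Consider part~(ii). Since $v(t,\cdot)$ is non-decreasing on $[0,a)$, the set $\{x\in[0,a):v(t,x)<0\}$ is downward closed, and it is relatively open in $[0,a)$ because $v$ is upper-semicontinuous; hence it equals $[0,b(t))$ for $b(t):=\sup\{x\in[0,a):v(t,x)<0\}\in[0,a]$ (with $\sup\emptyset:=0$). Using the symmetry $v(t,x)=v(t,-x)$, the $t$-section $\mathcal C_t:=\{x\in\mathcal I_\mu:(t,x)\in\mathcal C\}$ is therefore $(-b(t),b(t))$, which is the claimed form of $\mathcal C$. Finally, if $0\le t_1\le t_2$, then $v(t_1,\cdot)\le v(t_2,\cdot)$ by Corollary~\ref{time-increase}, so $\{x:v(t_2,x)<0\}\subseteq\{x:v(t_1,x)<0\}$, whence $b(t_2)\le b(t_1)$: the boundary $b$ is non-increasing. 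This proves~(ii).

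For part~(i), $v(t,\cdot)$ is non-increasing on $[0,a)$, so the set $\{x\in[0,a):v(t,x)<0\}$ is upward closed and, again by upper-semicontinuity of $v$, relatively open; hence, if $v(t,0)=0$, it equals $(b(t),a)$ with $b(t):=\inf\{x\in[0,a):v(t,x)<0\}\in[0,a]$ (with $\inf\emptyset:=a$), whereas if $v(t,0)<0$ the monotonicity forces $v(t,\cdot)<0$ throughout $[0,a)$, i.e.\ $\mathcal C_t=\mathcal I_\mu$. Put $t_0:=\sup\{t\ge0:v(t,0)<0\}$ (with $\sup\emptyset:=0$). The map $t\mapsto v(t,0)$ is non-decreasing by Corollary~\ref{time-increase} and upper-semicontinuous, so $\{t\ge0:v(t,0)<0\}=[0,t_0)$; consequently $\mathcal C_t=\mathcal I_\mu$ for $t<t_0$, whereas for $t\ge t_0$ we have $v(t,0)=0$ and, by symmetry, $\mathcal C_t=\{x\in\mathcal I_\mu:|x|>b(t)\}$. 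This gives the asserted description of $\mathcal C$ (the case $t_0=\infty$, when $\mathcal C=[0,\infty)\times\mathcal I_\mu$, being trivial). That $b$ is non-decreasing on $[t_0,\infty)$ follows once more from Corollary~\ref{time-increase}: for $t_0\le t_1\le t_2$ one has $\{x:v(t_2,x)<0\}\subseteq\{x:v(t_1,x)<0\}$, i.e.\ $(b(t_2),a)\subseteq(b(t_1),a)$, so $b(t_1)\le b(t_2)$. This proves~(i).

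The one substantial ingredient is the monotone pathwise coupling $Z^{t,x_1}(t+\cdot)\le Z^{t,x_2}(t+\cdot)$ of the two reflected diffusions with ordered starting points: the comparison results used elsewhere in the paper (such as \cite[Theorem IX.3.7]{RY} in the proof of Proposition~\ref{one-sided}) concern diffusions \emph{without} reflection, so the reflection at the origin has to be incorporated---either by invoking a comparison theorem for reflected one-dimensional stochastic differential equations, or by adapting the time-change construction of \cite{JT} to two synchronously coupled reflecting Brownian motions (which are trivially ordered). Everything else---the passage of monotonicity from $\Psi$ to $\Psi^2$, the optimal-stopping comparison, the use of the upper-semicontinuity of $v$ to identify the level sets of $v(t,\cdot)$ with half-open intervals, the exploitation of the symmetry, and the threshold $t_0$ that isolates the single exceptional point $x=0$ in part~(i)---is routine, with the time-monotonicity supplied by Corollary~\ref{time-increase}.
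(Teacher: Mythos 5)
Your proof is correct and follows essentially the same route as the paper: pass to the reflected process $Z=|\widehat X|$, obtain a pathwise comparison of reflected diffusions started from ordered points to get the spatial monotonicity of $v(t,\cdot)$ on $[0,a)$, and then combine this with Corollary~\ref{time-increase} exactly as in the proof of Proposition~\ref{one-sided}. You are in fact somewhat more explicit than the paper at the two points it leaves implicit --- the comparison theorem for \emph{reflected} one-dimensional SDEs (the paper just says ``by comparison'') and the threshold $t_0$ isolating the times when $v(t,0)<0$ in part~(i) --- so no gap.
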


\begin{proof}
Without loss of generality, we consider the initial time $t=0$.
For $x\geq 0$, let $(\widetilde Z,L)$ be the unique continuous process such that 
$L(0)=0$, $L$ is non-decreasing, $Z(0)=x$, $Z(s)\geq 0$ and
\[ \left\{\begin{array}{ll}
d\widetilde Z(s)=\Psi(s,\widetilde Z(s))\,d\widehat W(s) + dL(s)\\
\int_0^t \, \mathbf{ 1}_{\{\widetilde Z(s)=0\}}dL(s)=L(t).\end{array}\right.\]
Then $\widetilde Z$ is the reflected version of $\widehat X$, and the processes 
$\{Z(s),s\geq 0\}$ and $\{\widetilde Z(s), s\geq 0\}$ coincide in law.
Moreover,  by comparison we have that
$x_1\leq x_2$ implies that $\widetilde Z^{x_1}(t)\leq \widetilde Z^{x_2}(t)$ for all $t$. The proof then follows the proof of Proposition~\ref{one-sided}.
\end{proof}



\subsection{Symmetric Gaussian mixtures}

We end the article with a study of the case when the prior is given by 
a symmetric Gaussian mixture.
More precisely, let $\mu$ be given by
\[\mu(\vd u)=\frac{1}{2\sigma\sqrt{2\pi}}\left(\exp\left\{\frac{-(u-m)^2}{2\sigma^2}\right\} +\exp\left\{\frac{-(u+m)^2}{2\sigma^2}\right\}\right) \vd u\]
with $m\in(0,\infty)$ and $\sigma>0$, i.e., a mixture of two Gaussians $N(m,\sigma)$ and $N(-m,\sigma)$. Then
\begin{eqnarray*}
F(t,y) &=&  \frac{1}{
\sqrt{1+ \sigma^2 t}}\bigg( \exp\bigg\{ \,- \frac{1}{2\sigma^2} \, \bigg( \frac{(m + \sigma^2 y)^2}{1+\sigma^2 t}- m^2\bigg) \bigg\} \\
&& \hspace{30mm}+
  \exp\bigg\{ \,- \frac{1}{2\sigma^2} \, \bigg( \frac{(-m + \sigma^2 y)^2}{1+\sigma^2 t}- m^2\bigg) \bigg\}
\bigg),
\end{eqnarray*}
and straightforward calculations yield
\[
H(t,y)=\frac{\sigma^2}{1+\sigma^2t}+\frac{4m^2}{(1+\sigma^2t)^2}\left(\exp\left\{\frac{my}{1+\sigma^2t}\right\} + 
\exp\left\{\frac{-my}{1+\sigma^2t}\right\}\right)^{-2}.
\]
It follows that $\Psi(t,\cdot)$ is decreasing on $[0,\infty)$ and satisfies 
\[
\Psi(t,0)=\frac{\sigma^2}{1+\sigma^2t}+\frac{m^2}{(1+\sigma^2t)^2}\,, \qquad \Psi(t,\infty)=\frac{\sigma^2}{1+\sigma^2t}.
\]
Consequently, by (ii) of Proposition~\ref{symmetry}, there exists a non-increasing boundary $b:[0,\infty)\to[0,\infty]$
such that 
$$\mathcal C\,=\, \big\{(t,x)\in[0,\infty)\times \mathcal I_\mu:\vert x\vert<b(t) \big\} .$$
Furthermore, $b(t)=\infty$ for $t\in \big[0,(c^{-1/2}-\sigma^{-2})^+\big)$ and $b(t)=0$ for 
\[t\geq  \frac{1}{2\sqrt c}\left(1-2\sigma^{-2}\sqrt c+\sqrt{1+4m^2\sigma^{-4}c^{1/2}}\,\right)^+.\]

\bigskip

\section*{Dedication}

The second author is grateful to his teacher, Dr.\,V\'aclav E.\,\,Bene\v s,  for suggesting this problem to him more than twelve years ago   and for urging him to make progress on it ever since. 

We dedicate this paper to Dr.\,\,Bene\v s on the occasion of his upcoming 90th birthday, with affection and respect.

%
%
%

\newpage

\end{document}